\documentclass[12pt]{article}

\title{The absolute gradings on embedded contact homology and Seiberg-Witten Floer cohomology}
\author{Daniel Cristofaro-Gardiner}
\date{}

\usepackage{amssymb}
\usepackage{latexsym}
\usepackage{amsmath}
\usepackage{amsthm}
\usepackage{amscd}
\usepackage[dvips]{graphics}

\numberwithin{equation}{section}
\newcommand{\mc}[1]{{\mathcal #1}}
\newtheorem{theorem}{Theorem}[section]
\newtheorem{proposition}[theorem]{Proposition}

\newtheorem{lemma}[theorem]{Lemma}
\newtheorem{lemma-definition}[theorem]{Lemma-Definition}

\theoremstyle{definition}

\newtheorem{remark}[theorem]{Remark}

\newcommand{\eqdef}{\;{:=}\;}

\newcommand{\C}{{\mathbb C}}

\newcommand{\R}{{\mathbb R}}

\newcommand{\Z}{{\mathbb Z}}
\newcommand{\op}{\operatorname}

\newcommand{\Ker}{\op{Ker}}

\newcommand{\tensor}{\otimes}

\newcommand{\bpm}{\begin{pmatrix}}
\newcommand{\epm}{\end{pmatrix}}

\begin{document}

\setcounter{tocdepth}{2}

\maketitle
 
\begin{abstract}
    Let $Y$ be a closed connected contact $3$-manifold.  In \cite{echswf}, Taubes defines an isomorphism between the embedded contact homology (ECH) of $Y$ and its Seiberg-Witten Floer cohomology.  Both the ECH of $Y$ and the Seiberg-Witten Floer cohomology of $Y$ admit absolute gradings by homotopy classes of oriented two-plane fields.  We show that Taubes' isomorphism preserves these gradings.  To do this, we prove another result relating the expected dimension of any component of the Seiberg-Witten moduli space over a completed connected symplectic cobordism to the ECH index of a corresponding homology class.  
\end{abstract}

\section{Introduction}
\label{sec:intro}

Let $Y$ be a closed connected oriented $3$-manifold.  A {\em contact form} on $Y$ is a $1$-form $\lambda$ such that $\lambda \wedge d\lambda > 0$.  A contact form determines the {\em Reeb vector field} $R$ by the equations \[d\lambda(R,\cdot)=0, \quad \lambda(R)=1,\] and an oriented $2$-plane field $\xi \eqdef \op{Ker}(\lambda)$, called the {\em contact structure} for $\alpha$.  A {\em Reeb orbit} is a map $\gamma: \R/T\Z$ for some $T > 0$ such that $\gamma'(t)=R(\gamma(t))$.  A Reeb orbit $\gamma$ is called {\em nondegenerate} if  for some $y$ on the image of $\gamma$ the linearized flow along $\gamma$ restricted to $\xi_y$ does not have $1$ as an eigenvalue.  If $\gamma$ is nondegenerate and the eigenvalues of the linearized flow are real then $\gamma$ is called {\em hyperbolic}; otherwise, $\gamma$ is called {\em elliptic}.  A contact form is called {\em nondegenerate} if all of its Reeb orbits are nondegenerate. 

If $\lambda$ is nondegenerate and $\Gamma \in H_1(Y)$, then the {\em embedded contact homology} $ECH(Y,\lambda,\Gamma)$ of $Y$ is defined.  This is the homology of a chain complex freely generated over $\Z/2$\footnote{Embedded contact homology can also be defined over $\Z$, see \cite[\S 9]{obg2}.} by certain finite sets of Reeb orbits, called {\em orbit sets}, with respect to a differential that counts certain mostly embedded $J$-holomorphic curves in the symplectization of $Y$.  In \cite{echswf}, Taubes defines an isomorphism between ECH and the {\em Seiberg-Witten Floer cohomology} defined by Kronheimer and Mrowka in \cite{km}.  Specifically, Taubes shows \cite[Theorem 1]{taubes1} that there is a canonical isomorphism of relatively graded $\Z/2$ modules
\begin{equation}
\label{eqn:MainIsomorphism}
\mathcal{T}: ECH_*(Y,\lambda,\Gamma) \simeq \widehat{HM}^{-*}(Y,\mathfrak{s}_{\xi}+\op{PD}(\Gamma)),
\end{equation}  
where $\mathfrak{s}_{\xi}$ is a certain spin$^c$ structure determined by $\xi$, see \cite[\S 8]{hutchings_summary}, $\op{PD}(\Gamma)$ denotes the Poincare dual of $\Gamma$, and $\widehat{HM}^{-*}$ denotes the relatively graded module $\widehat{HM}^{*}$ with the grading reversed.  

Both embedded contact homology and Seiberg-Witten Floer cohomology admit absolute gradings by homotopy classes of oriented $2$-plane fields, see \cite{hutchings_absolute} and \cite{km}.  The main theorem of this paper asserts that the map $\mathcal{T}$ preserves this extra structure.  To be explicit, denote the direct sum of $ECH(Y,\lambda,\Gamma)$ over all $\Gamma$ by $ECH(Y,\lambda)$, and denote the direct sum of $\widehat{HM}^{-*}(Y,\mathfrak{s})$ over all isomorphism classes of spin$^c$ structures on $Y$ by $\widehat{HM}^{-*}(Y)$.  Let $j$ be a homotopy class of oriented $2$-plane fields on $Y$, and denote by $ECH_j(Y,\lambda)$ and $\widehat{HM}^{j}(Y)$ the submodules with grading $j$ of $ECH(Y,\lambda)$ and $\widehat{HM}^{-*}(Y)$ respectively.  We show:	
\begin{theorem}
\label{thm:MainTheorem}
The map $\mathcal{T}$ restricts to an isomorphism 
\begin{equation}
\label{eqn:RefinedIsomorphism}
ECH_j(Y,\lambda) \simeq \widehat{HM}^{j}(Y).
\end{equation}  
\end{theorem}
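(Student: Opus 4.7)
The plan is to combine two ingredients: (i) the fact that $\mathcal{T}$ is already known to preserve the relative $\Z$-grading on each spin$^c$ summand \cite{taubes1}, so that within each summand only a single additive constant must be identified, and (ii) a direct comparison of the two absolute gradings through an index formula on a symplectic cobordism.

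First I would unpack the two definitions in parallel. Hutchings' absolute ECH grading from \cite{hutchings_absolute} is obtained by choosing an almost complex four-manifold $(X,J)$ with boundary $Y$ modeled on the symplectization near infinity, a relative homology class $Z$ from a reference orbit set to an orbit set $\alpha$, and then combining the ECH index $I(Z)$ with topological data of $(X,J)$ to produce a homotopy class of oriented two-plane fields on $Y$. The Kronheimer-Mrowka absolute grading on $\widehat{HM}$ \cite{km} is defined analogously, replacing the ECH index by the expected dimension of the corresponding Seiberg-Witten moduli space on $X$ and replacing $J$-holomorphic data with spin$^c$ data.

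Next I would establish the auxiliary proposition announced in the abstract: for a completed connected symplectic cobordism $(X,\omega)$ from $(Y_-,\lambda_-)$ to $(Y_+,\lambda_+)$, generators $\alpha_\pm$ at the two ends, and a relative homology class $Z$ between them, the ECH index $I(Z)$ equals the expected dimension of any component of the Seiberg-Witten moduli space on $X$ for the corresponding spin$^c$ class, up to correction terms depending only on the topology and spin$^c$ data of $X$ and the homotopy classes of the contact planes at the ends. The proof would use the $\omega$-compatible complex structure to split the spinor bundle on $X$ as $E \oplus (E \tensor K^{-1})$, where $K$ is the canonical bundle, and compute the index of the resulting Dirac operator via the Atiyah-Patodi-Singer theorem; the $\eta$-invariant contributions at the ends should match the writhe and Conley-Zehnder sums appearing in the ECH index.

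With this proposition in hand, Theorem \ref{thm:MainTheorem} reduces to bookkeeping: both absolute gradings are computed by the same cobordism $X$, and the index identity, together with Taubes's generator correspondence matching spin$^c$ structures via $\mathfrak{s}_\xi + \op{PD}(\Gamma)$, forces the two two-plane field classes assigned to $\alpha$ and to $\mathcal{T}(\alpha)$ to coincide. The main obstacle will be the index comparison itself. The ECH index is a combinatorial-topological quantity combining the relative first Chern class, the relative self-intersection pairing, and Conley-Zehnder contributions, and is not \emph{a priori} the Fredholm index of any single operator; matching it to the Seiberg-Witten expected dimension on a non-compact cobordism requires extending Taubes's closed-manifold identities from \cite{echswf} to the completed setting, while keeping all reference connections, reference spin$^c$ structures, and reference two-plane fields mutually consistent so that the various boundary contributions cancel correctly.
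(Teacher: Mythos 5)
Your high-level strategy (first prove an index identity over a completed symplectic cobordism, then deduce that the generator correspondence preserves the plane-field grading) is the same as the paper's, but two of your steps have genuine gaps. First, you misdescribe Hutchings' absolute ECH grading: it is not defined by choosing an almost complex four-manifold with boundary $Y$; it is defined intrinsically on $Y$ by modifying the contact plane field along a transversal braid $\mathcal{L}$ and correcting by writhe and Conley--Zehnder terms, as in \eqref{eqn:ECHabsolutegradingequation}. Consequently the final step is not ``bookkeeping'': the Kronheimer--Mrowka grading is cobordism-defined while the ECH grading is not, and one must (a) manufacture a compact symplectic cobordism bounding $Y$ in which $[\alpha]$ becomes null-homologous so that $\mathfrak{s}_\xi+\op{PD}(\Gamma)$ extends --- the paper does Legendrian surgery along a Legendrian representative of $\Gamma$ and caps off with an Etnyre--Honda concave filling \cite{etnyre_symplectic_cobordisms} --- and (b) convert the relative Euler number $\tilde e(S^+_\alpha,\varphi_{\mathcal{L}})$ appearing in the Seiberg--Witten grading into the writhe/Conley--Zehnder/ECH-index combination. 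Step (b) is a real computation (Lemma~\ref{lem:windingnumberzero}): one builds an explicit $\mathcal{L}$-compatible representative of $P_\tau(\mathcal{L})$, works out the Clifford multiplication in a frame adapted to it, identifies $\tilde e(S^+_\alpha,\varphi_{\mathcal{L}})$ with $c_1(N,\tau)+c_1(K^{-1},\tau)$, and then invokes Hutchings' relative adjunction formula $c_1(N,\tau)=-w_\tau(\mathcal{L})+Q_\tau(Z)$ together with the definition \eqref{eqn:ECHIndex}. None of this appears in your plan, and without it the claim that the two plane fields ``coincide'' is unsupported. (Also note that reducing to ``one additive constant per spin$^c$ summand'' is delicate, since within a fixed spin$^c$ structure the grading set is a $\Z$-set with nontrivial stabilizer given by the divisibility of $c_1$; the paper simply checks the gradings generator by generator, after reducing to the $L$-flat case.)

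Second, your proposed proof of the auxiliary index theorem (the analogue of Theorem~\ref{thm:swindex}) via Atiyah--Patodi--Singer is only a sketch whose hard part is exactly the unresolved one. The asymptotic limits of the relevant operator are Taubes' large-$r$ irreducible solutions $c_{\alpha_\pm}$, whose spectral asymmetry is not directly accessible, so ``the $\eta$-invariant contributions should match the writhe and Conley--Zehnder sums'' is not a step one can currently carry out; moreover the ECH index \eqref{eqn:ECHIndex} contains no writhe term at all ($I=c_\tau+Q_\tau+\mu_\tau$), so the proposed matching is aimed at the wrong quantity. The paper avoids spectral invariants entirely by adapting Taubes' argument from \cite{taubes3}: represent $Z_{\mathbb{A},\Psi}$ by a surface with controlled ends, surger the completed cobordism along the zero circles of a self-dual two-form so the form becomes nonvanishing, construct a configuration from vortex solutions, compute the index of the resulting operator through Taubes' explicit kernel/cokernel description, and finally compare back to $\overline{X}$ by gluing in a closed piece and using the closed four-manifold index formula of \cite{km}. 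Invoking APS does not bypass this analysis, and your own closing caveat (that Taubes' identities must be extended to the completed setting with consistent reference data) is precisely the content that a complete proof must supply.
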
  

Theorem~\ref{thm:MainTheorem} implies that the absolutely graded $\Z/2$-module $ECH(Y,\lambda)$ is a topological invariant.  Theorem~\ref{thm:MainTheorem} follows from another result of potentially independent interest relating the expected dimension of any component of the Seiberg-Witten moduli space over a completed connected symplectic cobordism to the ECH index of a corresponding homology class, see Theorem~\ref{thm:swindex} below for the precise statement.

\section{Embedded contact homology}
\label{sec:ech}

We begin by reviewing those aspects of embedded contact homology that are relevant to the proofs of Theorem~\ref{thm:MainTheorem} and Theorem~\ref{thm:swindex}. 

\subsection{Definition of embedded contact homology}

We will first review the definition of embedded contact homology.  Define $ECC(Y,\lambda,\Gamma,J)$ to be the chain complex generated over $\Z/2$ by finite 
sets $\alpha = \lbrace (\alpha_i,m_i)\rbrace$ such that each $\alpha_i$ is a Reeb orbit, $m_i = 1$ if $\alpha_i$ is 
hyperbolic, and 
\[
\sum_i m_i[\alpha_i]= \Gamma \in H_1(Y).
\]  

An $\R$-invariant almost complex structure $J$ is called {\em admissible} if $J$ sends the two-plane field $\xi$ 
to itself, rotating it positively with respect to $d\lambda$, and satisfies $J(\partial_s)=R$, where $s$ denotes the $\R$ 
coordinate on $\R \times Y$.  The ECH chain complex differential $\partial_{ECH}$ counts certain $J$-holomorphic curves in $\R \times Y$ for an admissible $J$.  Specifically, if $\alpha$ and $\beta$ are two chain complex generators, then the coefficient $\langle \partial \alpha,\beta \rangle \in \Z/2$ is a count 
of $J$-holomorphic curves in $\R \times Y$, modulo translation in the $\R$ coordinate, that are asymptotic as currents to $\R \times \alpha$ as $s \to \infty$ and to $\R \times \beta$ as $s \to -\infty$ and which have {\em ECH index\/} $1$.  The ECH index, a certain function of the relative homology class of the curve, will be reviewed in \S\ref{sec:ECHIndex}.  If $J $ is generic, then $\partial$ is well-defined and $\partial^2 = 0$, see \cite{obg1} and \cite{obg2}.  

Define $ECH(Y,\lambda,\Gamma)$ to be the homology of this chain complex.  A priori, this might depend on $J$, but by the canonical isomorphism \eqref{eqn:MainIsomorphism} it does not.  The ECH index induces a relative $\Z/p$ grading on $ECH(Y, \lambda,\Gamma)$, as reviewed in \S\ref{sec:ECHIndex}, where $p$ denotes the divisibility of $c_1(\xi)+2\op{PD}(\Gamma)$ in $H^2(Y)$ mod torsion.   

\subsection{The absolute grading on ECH}
\label{sec:ECHAbsoluteGrading}

The relative $\Z/p$ grading on ECH can be refined to an absolute grading by homotopy classes of oriented $2$-plane fields.  We now review this construction.  For a  review of homotopy classes of oriented $2$-plane fields, see \cite[\S 3.1]{hutchings_absolute} (in particular, note that we follow the sign convention for the $\Z$-action on the set of homotopy classes of oriented $2$-plane fields in \cite[\S 3.1]{hutchings_absolute} by demanding that the isomorphism $\pi_3(S^2) \simeq \Z$ that sends the Hopf fibration to $+1$ is an isomorphism of $\Z$-sets).    

Recall first that a link $\mathcal{L}$ in $Y$ is {\em transversal\/} if $\mathcal{L}$ is transverse to the contact plane field at every point.  Let $\mathcal{L}$ be a transversal link and orient $\mathcal{L}$ so that it intersects the contact plane field positively.  A framing of $\mathcal{L}$ is equivalent to a homotopy class of symplectic trivializations of $\xi|_{\mathcal{L}}$.  Given a transversal link $\mathcal{L}$ with framing $\tau$, we can define a homotopy class of $2$-plane fields which we will denote by $P_{\tau}(\mathcal{L})$.

To do this, begin by taking a tubular neighborhood $N$ of $\mathcal{L}$.  On $N$, choose disjoint tubular neighborhoods $N_K$ for each component $K$ of the link and choose coordinates $\varphi_K: N_K \stackrel {\simeq}{\longrightarrow} S^1 \times D^2$ such that $\varphi_K$ sends $K$ to $S^1 \times \lbrace 0 \rbrace$ and $d\varphi_K$ sends $\xi|_K$ to $0 \times \R^2$ compatibly with $\tau$; extend this trivialization to a trivialization of the tangent bundle such that the contact plane field is identified with $\lbrace 0 \rbrace \times \R^2$ and the Reeb vector field is identified with $(1,0,0)$ at each point.  Next, choose a vector field $P$ such that on $S^1\times\{z\in D^2 \mid |z|>1/2\}$, the vector field $P$ intersects $\xi$ positively, on $S^1\times\{z\in D^2 \mid |z|<1/2\}$ the vector field $P$
intersects $\xi$ negatively, and on $S^1\times\{z\in D^2 \mid |z|=1/2\}$, the vector field $P$ is given according to the above trivialization by 
\begin{equation}
\label{eqn:P12}
P(t,e^{i\theta}/2) \eqdef (0,e^{-i\theta}).
\end{equation}

A homotopy class of vector fields determines a homotopy class of $2$-plane fields.   On $N$, define $P_{\tau}(\mathcal{L})$ to be the $2$-plane field determined by this vector field.   On $Y\setminus N$, set $P_{\tau}(\mathcal{L})$ equal to $\xi$.  This uniquely determines the homotopy class of $P_{\tau}(\mathcal{L}).$   

\begin{remark}
To compare the above construction to a perhaps more familiar one, note that if instead of requiring \eqref{eqn:P12}, we require that 
\[
P(t,e^{i\theta}/2) \eqdef (0,e^{i\theta}),
\]
then the homotopy class of the resulting $2$-plane field corresponds to the contact structure obtained from $\xi$ via a {\em Lutz twist} along $\mathcal{L}$ as defined for example in \cite{geiges}.  In particular, the resulting homotopy class of $2$-plane field does not depend on the framing $\tau$.  In our case, the homotopy class does depend on the framing: if $\tau\prime$ is another trivialization, then    
\[
P_\tau(\mathcal{L}) - P_{\tau'}(\mathcal{L}) \equiv 2(\tau-\tau') \mod d(c_1(\xi) + 2
\op{PD}([\mathcal{L}])).
\] 	
This is explained in \cite[\S 3.3]{hutchings_absolute}. 
\end{remark}

To associate a homotopy class of two-plane fields to an orbit set $\alpha=\lbrace (\alpha_i,m_i) \rbrace$, first choose trivializations $\tau = \lbrace \tau_i \rbrace$ of $\xi$ over each $\alpha_i$.  Next, choose disjoint tubular neighborhoods $N_i$ of the $\alpha_i$.  Finally, in each $N_i$ choose a braid $\zeta_i$ with $m_i$ strands around each $\alpha_i$ (this means that $\zeta_i$ is an oriented link in $N_i$ such that the projection of $\zeta_i$ to $\alpha_i$ is a degree $m$ orientation preserving submersion), and define $\mathcal{L}$ to be the union of these braids, with the framing induced by $\tau$.  Define $I_{ECH}(\alpha)$ by the formula 
\begin{equation}
\label{eqn:ECHabsolutegradingequation}
I_{ECH}(\alpha) \eqdef P_\tau(\mathcal{L}) - \sum_i w_{\tau_i}(\zeta_i) +
  \mu_\tau(\alpha),
\end{equation}
where $w_{\tau_i}(\zeta_i)$ is the writhe of the link $\zeta_i$ with respect to $\tau_i$ as defined in \cite[\S 2.6]{hutchings_absolute}, and $\mu_{\tau}(\alpha)$ is a certain sum of Conley-Zehnder index terms associated to $\alpha$, see \cite[\S 2.8]{hutchings_absolute} for the precise definitions.  

It is shown in \cite[Lem. 3.7]{hutchings_absolute} that $I_{ECH}(\alpha)$ is well-defined.  The homotopy class of $2$-plane fields $I_{ECH}(\alpha)$ is the absolute grading of the generator $\alpha$.   

\subsection{Symplectic cobordisms and the ECH index}
\label{sec:ECHIndex}

The proof of Theorem~\ref{thm:MainTheorem} and the statement of Theorem~\ref{thm:swindex} both involve the ECH index.  We now briefly review this construction.

Let $(Y_+,\lambda_+)$ and $(Y_-,\lambda_-)$ be closed contact $3$-manifolds.  A (connected) {\em symplectic cobordism} from $Y_+$ to $Y_-$ is a connected compact symplectic $4$-manifold $(X,\omega)$ such that $\partial X = -Y_{-} \sqcup Y_+$ and $\omega|_{Y_{\pm}}=d\lambda_{\pm}$.  Given a symplectic cobordism, it is a standard fact that one can always find neighborhoods $N_{\pm}$ of $Y_{\pm}$ in $X$ such that $(N_+,\omega)$ and $(N_-,\omega)$ are symplectomorphic to $((-\epsilon,0] \times Y_+,d(e^s\lambda_+))$ and $([0,\epsilon) \times Y_-,d(e^s\lambda_-))$ respectively.  We can therefore attach cylindrical ends to $(X,\omega)$ to obtain a non-compact symplectic manifold $\overline{X}$ called the {\em symplectic completion of X}.  Specifically, define $E_+ \eqdef [0,\infty) \times Y_+$ and $E_- \eqdef (-\infty,0] \times Y_-$.  Then $(\overline{X},\omega)$ is the symplectic manifold obtained by gluing $E_{\pm}$ to $Y_{\pm}$ via the  above identifications.

Let $X$ be a symplectic cobordism from $Y_+$ to $Y_-$.  If $\alpha^{+} = \lbrace (\alpha_i^{+},m_i^{+}) \rbrace$ is an orbit set in $Y_+$ and $\alpha^{-} = \lbrace (\alpha_{j}^{-},m_{j}^{-}) \rbrace$ is an orbit set in $Y_-$ such that $[\alpha^{+}]$ and $[\alpha^{-}]$ represent the same class in $H_1(\overline{X})$, define $H_2(\overline{X},\alpha^{+},\alpha_{-})$ to be the set of relative homology classes of $2$-chains in $\overline{X}$ such that
\[
\partial Z = \sum_i m_i^+ \{1\}\times\alpha_i^+ - \sum_j m_j^-
\{-1\}\times\alpha_j^-.
\]
Here, two $2$-chains are equivalent if and only if their difference is the boundary of a $3$-chain.

Let $\tau$ be a homotopy class of symplectic trivializations $\tau^+_i$ of the restriction of $\xi_+ = \Ker(\lambda_+)$ to $\alpha_i^+$ and  $\tau^-_j$ of the restriction of $\xi_- = \Ker(\lambda_-)$ to $\alpha_j^-$.  Let $Z \in H_2(\overline{X},\alpha^+,\alpha_{-})$.  Define the ECH index, $I_{ECH}(Z)$ by the formula

\begin{equation}
\label{eqn:ECHIndex}
I_{ECH}(Z) \eqdef c_\tau(Z) + Q_\tau(Z) +
\mu_\tau(\alpha^+) - \mu_\tau(\alpha^-),
\end{equation}
where $c_{\tau}(Z)$ and $Q_{\tau}(Z)$ are respectively the {\em relative first Chern class\/} and the {\em relative intersection pairing\/} of $Z$ with respect to the trivialization $\tau$, as defined in \cite[\S 4.2]{hutchings_absolute}.  As explained in \cite[\S 4.2]{hutchings_absolute}, the ECH index does not depend on $\tau$. 

In the case where $(\overline{X},\omega)=(\R \times Y,d(e^s\lambda))$, the ECH index induces a relative $\Z/p$ grading on $ECH_*(Y,\lambda,\Gamma)$.  This is explained (for example) in \cite[\S 2.8]{hutchings_absolute}.                 

\section{Seiberg-Witten Floer cohomology}
\label{sec:sw}

We now review those aspects of Seiberg-Witten Floer cohomology that are relevant to the proofs of our main theorems.  For more details, see \cite{km}. 

\subsection{Basic terminology}
\label{sec:spinc}

Let $Y$ be a closed oriented Riemannian $3$-manifold.  A {\em spin$^c$} structure on $Y$ is a unitary rank-$2$ complex vector bundle $\mathbb{S} \to Y$ with a {\em Clifford multiplication},
\[\rho: TY \to \op{Hom}(\mathbb{S},\mathbb{S}).\]
The Clifford multiplication is required to identify $TY$ isometrically with the subbundle of traceless skew-adjoint endomorphisms equipped with the inner product $(a,b) \to \frac{1}{2}(a^*b)$.  It is also required to respect orientation, by which we mean that if $e_i$ is an oriented frame then $\rho(e_1)\rho(e_2)\rho(e_3)=1$.  Spin$^c$ structures exist over any closed oriented Riemannian $3$-manifold and the set of isomorphism classes of spin$^c$ structures is an affine space over $H^2(Y,\Z)$.  A {\em spinor\/} is a smooth section of $\mathbb{S}$.  A unitary connection $\mathbb{A}$ on $\mathbb{S}$ is called {\em spin$^c$} if parallel transport via $\mathbb{A}$ is compatible with the Clifford multiplication.  The set of spin$^c$ connections is an affine space over the space of imaginary valued $1$-forms. Associated to a spin$^c$ structure is the {\em determinant} line bundle $\op{det}(\mathbb{S})$.  This is the line bundle $\Lambda^2\mathbb{S}$.  If $\mathbb{A}$ is a spin$^c$ connection, we denote by $\mathbb{A}^t$ the induced connection on $\Lambda^2\mathbb{S}$.  A spin$^c$ connection is equivalent to a Hermitian connection on $\Lambda^2\mathbb{S}$.  
Given a spin$^c$ connection $\mathbb{A}$, define the {\em Dirac operator} $D_{\mathbb{A}}$ to be the composition
\[
\Gamma(Y,{\mathbb S}) \stackrel{\nabla_A}{\longrightarrow}
\Gamma(Y,T^*X \tensor {\mathbb S}) \stackrel{\rho}{\longrightarrow}
\Gamma(Y,{\mathbb S}).
\]
Here, the Clifford multiplication $\rho$ by $1$-forms is defined by the isomorphism between vector fields and $1$-forms induced by the metric. 

Over a closed oriented Riemannian $4$-manifold $X$, a spin$^c$ structure $\mathfrak{s}_X$ is again a unitary complex vector bundle $\mathbb{S}$, this time of rank $4$, together with a Clifford multiplication $\rho: TY \to \op{Hom}(\mathbb{S},\mathbb{S})$.  The requirements for $\rho$ to be a Clifford multiplication are similar to the requirements for the three-manifold case.  Spin$^c$ structures also exist over any $4$-manifold, and the set of isomorphism classes of spin$^c$ structures is again an affine space over $H^2(X,\Z)$.  This is all explained in \cite[\S 1.1]{km}.  Clifford multiplication extends to $k$-forms by the rule
\[
\rho(\alpha \wedge \beta)=\frac{1}{2}(\rho(\alpha)\rho(\beta)+(-1)^{\op{deg}(\alpha)\op{deg}(\beta)}\rho(\beta)\rho(\alpha)),
\]
and over a $4$-manifold Clifford multiplication by the volume form induces an important decomposition of $\mathbb{S}$ into two orthogonal rank-2 complex vector bundles, $S^+$ and $S^-$, where $S^+$ is defined to be the $-1$ eigenspace of Clifford multiplication by the volume form.  In the $4$-dimensional case, a spinor is again defined to be a section of $\mathbb{S}$, and a spin$^c$ connection is again defined by requiring that Clifford multiplication be parallel.  The connection on $\Lambda^2 S^+$ induced by a spin$^c$ connection $\mathbb{A}$ is denoted by $\mathbb{A}^t$.  As in the three-dimensional case, the space of spin$^c$ connections on $\mathfrak{s}_X$ is an affine space over $iT^*X$.  

The definition of the Dirac operator $D_{\mathbb{A}}$ for a spin$^c$ structure over a $4$-manifold is completely analogous to the definition in the three-dimensional case.  Over a $4$-manifold, the Dirac operator interchanges sections of $S^+$ and $S^-$ and hence we have a decomposition $D_{\mathbb{A}} = D_{\mathbb{A}^+} + D_{\mathbb{A}^-}$ where
\[D_{\mathbb{A}^+}:\Gamma(S^+) \to \Gamma(S^-),
\]
and
\[D_{\mathbb{A}^-}:\Gamma(S^-) \to \Gamma(S^+).\]                  


In dimensions three or four, an automorphism of a spin$^c$ structure $(\mathbb{S},\rho)$ is a bundle isomorphism of $\mathbb{S}$ that is compatible with $\rho$.  This is the same as a map from the underlying manifold into $S^1$.  We call the set of maps from the underlying manifold to $S^1$ the {\em gauge group\/} and we call elements of this group {\em gauge transformations\/}.  If $M$ is a $3$-manifold or a $4$-manifold and $\mathfrak{s}$ is a spin$^c$ structure over $M$,  denote by $\mathcal{C}(Y,\mathfrak{s})$ the space of pairs $(\mathbb{A},\Psi)$ such that $\mathbb{A}$ is a spin$^c$ connection and $\Psi$ is a spinor.  We call such a pair a {\em configuration\/} and call $\mathcal{C}$ the {\em configuration space\/}.  The gauge group acts on $\mathcal{C}$ by
\[ g \cdot (\mathbb{A},\Psi) \eqdef (\mathbb{A}-2g^{-1}dg,g\Psi).
\]   
           
\subsection{The three-dimensional Seiberg-Witten equations}
\label{sec:SWequations}

We will now introduce the three-dimensional Seiberg-Witen equations.  Let $Y$ be a closed oriented Riemannian $3$-manifold with spin$^c$ structure $\mathfrak{s}=(\mathbb{S},\rho).$  Fix an exact $2$-form $\mu$ on $Y$.  The {\em three-dimensional Seiberg-Witten equations with perturbation} are the equations for a configuration $(\mathbb{A},\Psi)$  given by
\begin{equation}
\label{eqn:CriticalPointEquation}
\begin{split}
D_{\mathbb{A}}\Psi &= 0, \\
* F_{\mathbb{A}^t} &= \langle \rho(\cdot)\Psi,\Psi\rangle+i*\mu. 
\end{split}
\end{equation}
Here, $F_{\mathbb{A}^t}$ denotes the curvature of $\mathbb{A}^t$. 
Fix a reference spin$^c$ connection $\mathbb{A}_0$.  Solutions of \eqref{eqn:CriticalPointEquation} are equivalent to critical points of the {\em perturbed Chern-Simons-Dirac functional\/}. This is the map $\mathcal{F}:\mathcal{C}(Y,\mathfrak{s}) \to \R$ defined by
\begin{equation}
\label{eqn:CSDFunctional}
\mathcal{F}(\mathbb{A},\varphi)=-\frac{1}{8}\int_Y (\mathbb{A}^t - \mathbb{A}_0^t) \wedge (F_{\mathbb{A}^t} + F_{\mathbb{A}_0^t}-2i\mu) + \frac{1}{2}\int_Y \langle D_{\mathbb{A}} \varphi,\varphi \rangle d\op{vol}. 
\end{equation}

While the functional $\mathcal{F}$ is not in general gauge invariant, the gauge group acts on solutions to \eqref{eqn:CriticalPointEquation}.

\subsection{Floer homology}
\label{sec:SWcohomology}

We now briefly review the details of the construction of the Seiberg-Witten Floer cohomology groups, which are related to the formal Morse homology of the functional $\mathcal{F}$.  Call a solution to \eqref{eqn:CriticalPointEquation} {\em reducible\/} if $\Psi = 0$ and call it irreducible otherwise.    
The Seiberg-Witten Floer cohomology chain complex $\widehat{CM}^*(Y,\mathfrak{s})$ can be decomposed into submodules
\[
\widehat{CM}^*(Y,\mathfrak{s}) =
\widehat{CM}^*_{irr}(Y,\mathfrak{s}) \oplus \widehat{CM}^*_{red}(Y,\mathfrak{s}),
\] 
where $\widehat{CM}^*_{irr}$ is the free $\Z/2$-module generated by gauge equivalence classes of irreducible solutions to \eqref{eqn:CriticalPointEquation} after choosing $\mu$ generically so that these solutions are cut out transversely, and $\widehat{CM}^*_{red}$ is another term involving the reducible solutions.  Only the irreducible component of this chain complex is relevant to the construction of the map $\mathcal{T}$ from \eqref{eqn:MainIsomorphism}, so we will not review the definition of $\widehat{CM}^*_{red}$ here.    

The part of the chain complex differential $\partial$ mapping the irreducible component to itself counts gauge equivalence classes of smooth one-parameter families of pairs $(\mathbb{A}(s),\Psi(s))$ that solve the equations
\begin{equation}
\label{eqn:instantoneqn}
\begin{split}
\frac{\partial}{\partial s} \Psi(s) & = -D_{\mathbb{A}(s)}\Psi(s), \\
\frac{\partial}{\partial s} \mathbb{A}(s) &= -* F_{\mathbb{A}(s)}+\langle cl(\cdot)\Psi,\Psi\rangle + i*\mu, \\
\lim_{s \to \pm\infty} (\mathbb{A}(s),\Psi(s)) &= (\mathbb{A}_{\pm},\Psi_{\pm}),
\end{split}
\end{equation}
where $(\mathbb{A}_{\pm},\Psi_{\pm})$ are solutions to \eqref{eqn:CriticalPointEquation}.  These are equations for the downward gradient flow of the functional \eqref{eqn:CSDFunctional} with respect to the metric on $\mathcal{C}$ induced by the Hermitian inner product on $\mathbb{S}$ and $1/4$ of the $L^2$ inner product on $iT^*Y$.  Solutions to \eqref{eqn:instantoneqn} are called {\em instantons}.  If $\mathfrak{c}_{\pm}$ are two irreducible solutions to \eqref{eqn:CriticalPointEquation}, then the coefficient of $\mathfrak{c}_-$ in the differential of $\mathfrak{c}_+$ is a signed count of gauge equivalence classes of ``index one" instantons from $\mathfrak{c}_-$ to $\mathfrak{c}_+$, modulo translation in the $s$ coordinate, after making ``abstract perturbations" to \eqref{eqn:CriticalPointEquation} and \eqref{eqn:instantoneqn} to obtain transversality of the relevant moduli spaces.  

``Abstract perturbations" are described in \cite[Ch. 11]{km} and play little role in the proof of Theorem~\ref{thm:MainTheorem}.  The ``index" is the local expected dimension of the moduli space of instantons modulo gauge equivalence.  The index induces a relative $\Z/p$ grading on the chain complex such that the differential increases the grading by $1$, see \cite[\S 2.1]{hutchings_arnold_chord}.  Here, $p$ is equal to the divisibility of $c_1(\mathfrak{s})$ in $H^2(Y,\Z)$ mod torstion. 

\subsection{The absolute grading of a critical point}
\label{sec:CanonicalGrading}

As is the case for embedded contact homology, the relative grading for $\widehat{HM}^{-*}(Y,\mathfrak{s})$ can be refined to an absolute grading.  To explain Kronheimer and Mrowka's construction, we need to introduce the {\em four-dimensional Seiberg-Witten equations}.  If $X$ is any (possibly non-compact) spin$^c$ $4$-manifold, the four-dimensional Seiberg-Witten equations (with perturbation) on $X$ for a configuration $(\mathbb{A},\Psi)$ is the system
\begin{equation}
\label{eqn:4DSW}
\begin{split}
\frac{1}{2}\rho(F^+_{\mathbb{A}^t})+\mathfrak{p}(\mathbb{A},\Psi)-(\Psi\Psi^*)_0 &= 0 \\
D_\mathbb{A}^+ \Psi &= 0.
\end{split}
\end{equation}
Here, $F^+_{\mathbb{A}^t}$ denotes the self-dual part of the curvature $2$-form, $(\Psi\Psi^*)_0$ denotes the traceless component of $\Psi\Psi^*$, and $\mathfrak{p}(\mathbb{A},\psi)$ denotes a gauge invariant perturbation term, see \cite[\S 24.1]{km}.  When $X=\R \times Y$, the system \eqref{eqn:4DSW} is equivalent to the system \eqref{eqn:instantoneqn} for an appropriate spin$^c$ structure, see \cite[\S 4.3]{km}.  The action of the gauge group on $\mathcal{C}$ induces an action on solutions of \eqref{eqn:4DSW}.    
 
To prove Theorem~\ref{thm:MainTheorem}, we only need to know the definition of the absolute grading for irreducible solutions to \eqref{eqn:CriticalPointEquation} that are nondegenerate i.e. cut out transversely (see \cite[Def. 12.1.1]{km} for the precise definition).  So let $\mathfrak{c}$ be such a solution and let $X$ be any compact connected oriented Riemannian $4$-manifold with oriented boundary $Y$ extending the spin$^c$ structure $\mathfrak{s}$ via a spin$^c$ structure $\mathfrak{s}_X$.  Assume that the Riemannian metric on $X$ is such that $X$ contains an isometric copy of $I \times Y$ for some interval $I = (-C,0],$ with $\partial X$ identified with $\lbrace 0 \rbrace \times Y$.  We can therefore {\em attach a cylindrical end to $X$} i.e. glue in a copy of the cylinder $[0,\infty) \times Y$ to $X$ to get a non-compact $4$-manifold $\overline{X}$ with spin$^c$ structure $\mathfrak{s}_{\overline{X}}$ extending the spin$^c$ structure on $\overline{X}$ via a translation invariant spin$^c$ structure on the end.
 
Denote the moduli space of gauge equivalence classes of configurations for the spin$^c$ structure $\mathfrak{s}_{\overline{X}}$ that are asymptotic (as in \cite[\S 13.1]{km}) to $\mathfrak{c}$ on the cylindrical end of $\overline{X}$ by $\mathcal{B}(\overline{X},\mathfrak{s}_{\overline{X}},\mathfrak{c})$ and denote the gauge equivalence classes of solutions to \eqref{eqn:4DSW} that are asymptotic to $\mathfrak{c}$ on the cylindrical end of $\overline{X}$ by $M(\overline{X},\mathfrak{s}_{\overline{X}},\mathfrak{c})$.  Here, the perturbation term to \eqref{eqn:4DSW} is constructed from the perturbation to \eqref{eqn:CriticalPointEquation}, see \cite[\S 24.1]{km}.  Denote by $\mathcal{B}(\overline{X},\mathfrak{c})$ and by $M(\overline{X},\mathfrak{c})$ the union of $\mathcal{B}(\overline{X},\mathfrak{s}_{\overline{X}},\mathfrak{c})$ and $M(\overline{X},\mathfrak{s}_{\overline{X}},\mathfrak{c})$ respectively over all spin$^c$ structures $s_{\overline{X}}$ on $\overline{X}$ extending $\mathfrak{s}$.

In general, the space $M(\overline{X},\mathfrak{c})$ can contain multiple connected components.  These are parametrized by $\pi_0(\mathcal{B}(\overline{X},\mathfrak{c}))$, which is an affine space over $H^2(X,\partial X,\Z)$. Let $z$ be an element of $\pi_0(\mathcal{B}(\overline{X},\mathfrak{c}))$.  Following \cite[Defn. 24.4.5]{km}, we now define an integer $gr_z(X,\mathfrak{c})$ which is the expected dimension of the component of $M(\overline{X},\mathfrak{c})$ corresponding to $z$.  If $(\mathbb{A},\Psi)$ is any element of $\mathcal{B}(\overline{X},\mathfrak{c})$, define the operator
\[
D^{\overline{X}}_{\mathbb{A},\Psi}: L^2_1(i T^* \overline{X})\oplus L^2_1(S^+)  \rightarrow  L^2(i\mathbb{R}) \oplus L^2(isu(S^{+}))\oplus L^2(S^-)
\]

by          
\begin{equation}\label{eq:operator}
D^{\overline{X}}_{\mathbb{A}, \Psi}(a,\varphi) =  (-d^{*}a+iIm(\Psi^{*}\varphi),\frac{1}{2}\rho(d^+a)-(\Psi \varphi^{*}+\varphi\Psi^{*})_0,D_{\mathbb{A}}^{+}\varphi+\rho(a)\Psi), 
\end{equation}
where $L^2_1(i T^*\overline{X}),L^2_1(S^+), L^2(i\mathbb{R}), L^2(isu(S^{+}))$, and $ L^2(S^-)$ denote Sobolev completions of the space of compactly supported smooth sections of these bundles over $\overline{X}$, see \cite[\S 13]{km}, and $d^+a$ denotes the self-dual component of $da$.  This is the linearization of the unperturbed $4$-dimensional Seiberg-Witten equations with a gauge fixing term.  As explained in \cite[\S 3.d]{taubes1} and \cite[Lem. 2.4]{taubes3}, when $\mathfrak{c}$ is irreducible and nondegenerate the operator $D^{\overline{X}}_{\mathbb{A},\Psi}$ is Fredholm.  The integer $gr_z(X,\mathfrak{c})$ is by definition the index of $D^{\overline{X}}_{\mathbb{A},\Psi}$ for $(\mathbb{A},\Psi)$ a lift of the gauge equivalence class of an element in the component of $\mathcal{B}(\overline{X},\mathfrak{c})$ corresponding to $z$.  As explained in \cite[\S 24]{km}, $gr_z(X,\mathfrak{c})$ can be defined for reducible solutions as well.  We call $gr_z(X,\mathfrak{c})$ the {\em Seiberg-Witten index}.    

If $\varphi_0$ is any section of $\mathbb{S}^+|_{\partial X}$, denote by $e(\mathbb{S}^+,\varphi_0) \in H^4(X,\partial X;\Z)$ the {\em relative Euler class} of $\mathbb{S}^+$ relative to $\varphi_0$.  To define the absolute grading, choose a nowhere-zero section $\varphi_0$ of $\mathbb{S}^{+}|_{\partial X}$ such that $e(\mathbb{S}^{+},\varphi_0)[X,\partial X]=gr_z(X;\mathfrak{c})$.  The pair $(\mathbb{S}^{+}|_{\partial X},\varphi_0)$ is a spin$^c$ structure on $Y$ equipped with a non-zero section, so we can apply the following basic lemma \cite[Lem. 28.1.1]{km}:

\begin{lemma}
\label{lem:bijection}
On an oriented Riemannian $3$-manifold $Y$, there is a one-to-one correspondence between oriented $2$-plane fields $\xi$ and isomorphism classes of pairs $(\mathfrak{s},\varphi)$ consisting of a spin$^c$ structure and a unit-length spinor $\varphi$.
\end{lemma}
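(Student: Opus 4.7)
The plan is to construct explicit maps in both directions and verify they are mutually inverse. As motivation, note that pointwise the space of oriented $2$-planes in $\R^3$ is $S^2$ (parametrized by the positive unit normal), while the space of unit spinors in $\C^2$ modulo the overall $U(1)$ phase is also $S^2$ (the Hopf fibration $S^3 \to S^2$); the main task is to globalize this correspondence naturally over $Y$.

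For the forward direction, given an oriented $2$-plane field $\xi \subset TY$, the metric and orientation endow $\xi$ with an almost complex structure $J$ given by rotation by $+\pi/2$, making $\xi$ into a Hermitian complex line bundle. Let $R$ denote the positively oriented unit normal. I would set $\mathbb{S}_\xi := \underline{\C} \oplus \xi$ and define a Clifford multiplication $\rho$ by requiring $\rho(R)$ to act as $+i$ on $\underline{\C}$ and as $-i$ on $\xi$, extended via the Clifford relations so that a unit vector $v \in \xi$ interchanges the two summands in the standard way. A routine check shows that $(\mathbb{S}_\xi, \rho)$ is a $\Spinc$ structure. The canonical unit spinor is $\varphi_0 := (1, 0)$.

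For the reverse direction, given a pair $(\mathfrak{s}, \varphi)$ with $|\varphi| = 1$, skew-Hermiticity of Clifford multiplication by real tangent vectors ensures that $v \mapsto -i\langle \rho(v)\varphi, \varphi\rangle$ is a real-valued linear functional on $TY$. Let $R_\varphi$ be the vector field dual to this functional via the metric, and set $\xi_\varphi := R_\varphi^\perp$, oriented so that $R_\varphi$ is its positive normal. A pointwise calculation in the standard representation $\rho(e_k) = i\sigma_k$ (Pauli matrices) applied to $\varphi = (1, 0)$ yields $|R_\varphi| = |\varphi|^2 = 1$, so $\xi_\varphi$ is an honest oriented $2$-plane field.

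Finally, I would verify that these maps are mutually inverse. Starting with $\xi$ and applying both constructions, one checks directly that $\rho(R)\varphi_0 = i\varphi_0$ while $\rho(v)\varphi_0 \in \{0\} \oplus \xi$ for $v \in \xi$, so $R_{\varphi_0} = R$ and $\xi_{\varphi_0} = \xi$. Conversely, given $(\mathfrak{s}, \rho, \varphi)$, the orthogonal splitting $\mathbb{S} = \C\varphi \oplus (\C\varphi)^\perp$ together with the Clifford action of $R_\varphi$ and vectors in $\xi_\varphi$ identifies $(\mathbb{S}, \rho, \varphi)$ with the model $(\mathbb{S}_{\xi_\varphi}, \rho, \varphi_0)$; any ambiguity in this identification is precisely a bundle automorphism intertwining $\rho$ and fixing the spinor, so the correspondence descends to isomorphism classes on both sides. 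The main technical input is the pointwise identity $|R_\varphi| = |\varphi|^2$: this is an elementary but essential calculation in the spin representation of $\R^3$, after which the bijection follows by globalizing the pointwise picture via the natural constructions above.
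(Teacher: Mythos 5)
Your proposal is correct and follows essentially the same route as the paper, which simply invokes \cite[Lem.\ 28.1.1]{km} and, in the proof of Lemma~\ref{lem:windingnumberzero}, uses exactly your model: $\mathbb{S}=\underline{\C}\oplus\xi$ with $\underline{\C}$ the $+i$ eigenspace of $\rho(R)$, $\xi$ the $-i$ eigenspace, $\rho(v)\varphi_0=(0,v)$ for $v\perp R$, and the inverse map recovering the plane field from the real $1$-form $v\mapsto -i\langle\rho(v)\varphi,\varphi\rangle$ of unit length. The only point left implicit in your write-up is the verification that $\rho(R_\varphi)\varphi=i\varphi$ (a Cauchy--Schwarz equality argument from $|R_\varphi|=|\varphi|^2=1$), which is the standard step in Kronheimer--Mrowka's proof.
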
 

By \cite[Prop. 28.2.2]{km}, the isomorphism class of $(\mathbb{S},\varphi_0)$ depends only on $Y, \mathfrak{s},$ and $\mathfrak{c}$, and so the bijection of Lemma~\ref{lem:bijection} induces a well-defined grading by homotopy classes of oriented $2$-plane fields, which we denote by $I_{SW}$.  This refines the relative grading on $\widehat{HM}^{-*}(Y,\mathfrak{s})$, see \cite[\S 28]{km}.  The absolute grading can be defined for reducible critical points as well, see \cite[\S 28]{km}.

\begin{remark}
\label{rmk:signs}
Our sign convention (as explained in \S\ref{sec:ECHAbsoluteGrading}) for the $\Z$-action on the set of homotopy classes of $2$-plane fields is opposite the sign convention in \cite[\S 28]{km}.  This is because the grading defined by Kronheimer and Mrowka refines the relative grading on $\widehat{HM}^*$, while our grading refines the relative grading on $\widehat{HM}^{-*}$.
\end{remark}

\section{Taubes' isomorphism}

This section very briefly summarizes Taubes' isomorphism between embedded contact homology and Seiberg-Witten Floer cohomology.  For more details, see \cite{echswf}.  

\subsection{Taubes' equations}
\label{sec:TaubesPerturbation}

Let $(Y,\lambda)$ be a contact manifold.  A choice of admissible almost complex structure $J$ induces a metric $g$ on $Y$ by requiring that the Reeb vector field $R$ has length $1$, is orthogonal to the contact planes $\xi$, and
\begin{equation}
\label{eqn:Jg}
g(v,w) = \frac{1}{2}d\lambda(v,Jw), \quad\quad v,w\in\xi_y.
\end{equation}
Let $\mathbb{S}$ be the spin bundle for the spin$^c$ structure $\mathfrak{s}_{\xi} + \op{PD}(\Gamma)$.  Clifford multiplication by $\lambda$ gives a decomposition
\[
\mathbb{S}=E \oplus (E \otimes \xi),
\]
where $E$ and $E \otimes \xi$ are, respectively, the $+i$ and $-i$ eigenspaces of Clifford multiplication by $\lambda$.  Here $\xi$ is regarded as a complex line bundle.

Connections on $\op{det} \mathbb{S}$ can therefore be written as $\mathbb{A}_0 + 2\mathbb{A}$ where $\mathbb{A}_0$ is a certain fixed connection on $\xi$, as reviewed in ~\cite[\S 2.a]{tw}, and $\mathbb{A}$ is a connection on $E$.  We can therefore regard a connection on $E$ as a connection on $\op{det} \mathbb{S}$.  With this in mind, consider the system of equations for a connection $\mathbb{A}$ on $E$ and a spinor $\psi$ given by
\begin{equation}
\label{eqn:taubessw}
\begin{split}
*F_{\mathbb{A}}&=r(\langle \rho(\cdot)\psi,\psi\rangle-i\lambda)+i(* d\mu+\bar{\omega})\\
D_{\mathbb{A}}\psi&= 0.
\end{split}
\end{equation}
Here, $\bar{\omega}$ denotes the harmonic $1$-form such that $*\frac{\bar{\omega}}{\pi}$  represents the image of $c_1(\xi)$ in $H^2(Y;\R)$, $r$ is a positive real number, and $\mu$ is a suitably generic coclosed $1$-form that is $L^2$-orthogonal to the space of harmonic $1$-forms and that has ``P-norm" less than $1$.  The P-norm controls the derivatives of $\mu$ to all orders, see  \cite[\S2.2]{hutchings_arnold_chord}.  This is a a special case of \eqref{eqn:CriticalPointEquation} where we have also rescaled the spinor by $\sqrt{r}$.  
    
If $\mu$ is generic, then all of the irreducible solutions to \eqref{eqn:taubessw} are nondegenerate.  One can also make additional small perturbations to the equations so that the moduli spaces needed to define the chain complex differential are all cut out transversely.  Moreover, in any fixed grading, if $r$ is sufficiently large, these additional perturbations can be chosen such that only irreducible solutions to this perturbed version of \eqref{eqn:taubessw} contribute to the Seiberg-Witten cohomology chain complex in that grading, see \cite[Prop. 3.5]{tw}.  By \cite[\S 2.1]{hutchings_arnold_chord}, these perturbations can be chosen to vanish to any given order on the irreducible solutions to \eqref{eqn:taubessw}, so that the irreducible solutions to \eqref{eqn:taubessw} and the solutions to this perturbed version of \eqref{eqn:taubessw} are the same.         

\subsection{Taubes' proof}
\label{sec:taubesproof}

The basic idea behind the isomorphism \eqref{eqn:MainIsomorphism} is that as $r$ gets very large, the zero set of the $E$ component of the spinor for solutions of \eqref{eqn:taubessw} converges (as a current) to an ECH chain complex generator, and the symplectic action of this chain complex generator is very close to $2\pi$ times the ``energy" of the solution.  

To state this precisely, recall that if $\alpha=\{(\alpha_i,m_i)\}$ is a generator of the ECH chain complex, the {\em symplectic action\/} of $\alpha$ is the number
\[
\mc{A}(\alpha) \eqdef \sum_im_i \int_{\alpha_i}\lambda.
\]
Because of the conditions on $J$, the ECH chain complex differential decreases the symplectic action.  Hence, for any real number $L$, we can define {\em filtered ECH\/}, $ECH^{L}(Y,\lambda,\Gamma)$, to be the homology of the subcomplex of the ECH chain complex spanned by generators with action strictly less than $L$.
 
Given a configuration $(\mathbb{A},\Psi)$, define the {\em energy\/}
\begin{equation}
\label{eqn:energy}
E(\mathbb{A}) \eqdef i\int_Y \lambda\wedge F_A,
\end{equation}
and define $\widehat{CM}^{*}_L(Y,\mathfrak{s},\lambda,r)$ to be the submodule of $\widehat{CM}^{*}_{irr}$ generated by irreducible solutions $(\mathbb{A},\Psi)$ to \eqref{eqn:CriticalPointEquation} (perturbed as in \S\ref{sec:TaubesPerturbation}) with energy less than $2\pi L$.  If $r$ is sufficiently large, and $\lambda$ has no orbit set of action exactly $L$, then one can show \cite[Lem. 2.3]{hutchings_arnold_chord} that all of the solutions to \eqref{eqn:taubessw} with energy less than $2 \pi L$ are irreducible and the chain complex differential for $\widehat{CM}^*(Y,\mathfrak{s},\lambda,r)$ maps $\widehat{CM}^*_L(Y,\mathfrak{s},\lambda,r)$ to itself. 

The key fact (\cite[Prop. 3.1]{hutchings_arnold_chord}) needed for the proof of \eqref{eqn:MainIsomorphism} is that if $r$ is sufficiently large and $(\lambda,J)$ is ``$L$-flat", then for any $\Gamma \in H_1(Y)$, there is a canonical bijection between the set of generators of $\widehat{CM}_L^{-*}(Y,\mathfrak{s}_{\xi}+\op{PD}(\Gamma);\lambda,r)$ and the set of admissible orbit sets in the homology class $\Gamma$ of length less than $L$.  This induces an isomorphism of relatively graded chain complexes
\begin{equation}
\label{eqn:CanonicalIso}
ECC_*^L(Y,\lambda,\Gamma) \stackrel{\simeq}{\longrightarrow} \widehat{CM}_L^{-*}(Y,\mathfrak{s}_{\xi}+\op{PD}(\Gamma);\lambda,r),
\end{equation}
which, as explained in \cite[\S 3]{hutchings_arnold_chord}, induces the isomorphism $\mathcal{T}$ between $ECH(Y,\lambda,\Gamma)$ and $\widehat{HM}^{-*}(Y,\mathfrak{s}_{\xi+\op{PD}(\Gamma)})$.  Roughly speaking, the bijection between chain complex generators is given by constructing an approximate solution to \eqref{eqn:taubessw} for large $r$ from an ECH chain complex generator by using the ``vortex equations", see \cite{taubes1}, and then using perturbation theory to get an actual solution to \eqref{eqn:taubessw}.  

The $L$-flat condition is a condition on the form of $\lambda$ and $J$ in tubular neighborhoods of those Reeb orbits with action less than $L$.  In the case where $(\lambda,J)$ is not $L$-flat, one can take an {\em L-flat approximation} of $\lambda$: a pair $(\lambda,J)$ of nondegenerate contact form and admissible almost complex structure can always be approximated by an $L$-flat pair $(\lambda_1,J_1)$ without changing the Reeb orbits or the lengths of the orbits with action less than $L$, and this identification induces an isomorphism of chain complexes
\begin{equation}
\label{eqn:lflat}
ECC_*^L(Y,\lambda,\Gamma;J) \stackrel{\simeq}{\longrightarrow} ECC_*^L(Y,\lambda_1,\Gamma;J_1).
\end{equation}     
This is all explained in \cite[\S 3]{hutchings_arnold_chord}.
\section{Proof of theorems}

\subsection{The Seiberg-Witten index in a symplectic cobordism}
\label{sec:swindex}

To prove Theorem~\ref{thm:MainTheorem}, we will first prove another theorem relating the expected dimension of any component of the Seiberg-Witten moduli space over a symplectic cobordism to the ECH index of a corresponding relative homology class.  

To be specific, let $(X,\omega)$ be a connected symplectic cobordism from $(Y_1,\lambda_1)$ to $(Y_2,\lambda_2)$ as in \S\ref{sec:ECHIndex}, and denote by $\overline{X}$ the symplectic completion of $X$.  Let $J$ be an admissible almost complex structure on $\overline{X}$, and let $g$ be the Riemannian metric induced by $\omega$ and $J$.  Let $\alpha_1$ be an orbit set on $Y_1$ and let $\alpha_2$ be an orbit set on $Y_2$. Assume that the contact forms $\lambda_1$ and $\lambda_2$ are ``$L$-flat", where $L$ is some constant greater than the symplectic action of either $\alpha_1$ or $\alpha_2$.  Recall that the canonical isomorphism \eqref{eqn:CanonicalIso} is induced from a canonical bijection between the set of generators of $\widehat{CM}_L^{-*}(Y,\mathfrak{s}_{\xi}+\op{PD}(\Gamma);\lambda,r)$ and the set of admissible orbit sets in the homology class $\Gamma$ of length less than $L$, and denote by $c_{\alpha_1}$ and $c_{\alpha_2}$ the elements corresponding to $\alpha_1$ and $\alpha_2$ respectively under this bijection.  By \cite[\S 2.a]{taubes3}, if $r$ is sufficiently large, then $c_{\alpha_1}$ and $c_{\alpha_2}$ are both nondegenerate and belong to the irreducible component of the chain complex $\widehat{CM}^*.$  

Let $\mathfrak{s}_{Y_1}$ and $\mathfrak{s}_{Y_2}$ denote the spin$^c$ structures on $Y_1$ and $Y_2$ corresponding to $c_{\alpha_1}$ and $c_{\alpha_2}$ respectively.  Then $c_{\alpha_1}, c_{\alpha_2},\mathfrak{s}_{Y_1}$, and $\mathfrak{s}_{Y_2}$ induce a spin$^c$ structure $\mathfrak{s}_Y$ and configuration $\mathfrak{c}$ on $Y=Y_1 \cup -Y_2$.
Recall the space $\mathcal{B}(\overline{X},\mathfrak{c})$ from \S\ref{sec:CanonicalGrading}, and let $(\mathbb{A},\Psi)$ be an element of $\mathcal{B}(\overline{X},\mathfrak{c})$.  The configuration $(\mathbb{A},\Psi)$ determines a spin$^c$ structure $\mathfrak{s}_{\mathbb{A},\Psi}$ over $\overline{X}$.  As before, denote by $S^+$ the $-1$ eigenspace of Clifford multiplication by the volume form on the spin$^c$ structure $\mathfrak{s}_{\mathbb{A},\Psi}$.  Since $\overline{X}$ is symplectic, we can write 
\[S^{+} = E \oplus (E \otimes K^{-1}),\]
where $K^{-1}$ denotes the inverse of the canonical bundle and $E$ and $E \otimes K^{-1}$ are, respectively, the $-2i$ and $+2i$ eigenspaces of Clifford multiplication by the symplectic form.  This is reviewed, for example, in \cite[\S 4.2]{survey}.  We can then write the spinor 
\[\Psi=(\alpha,\beta)\]
according to this decomposition, where $(\mathbb{A},\Psi)$ now denotes a specific lift of its gauge equivalence class. Assume that $(\mathbb{A},\Psi)$ is such that $\alpha$ intersects the zero section transversally.  Hence, $\alpha^{-1}(0)$ is an embedded (real) surface.  Denote this surface by $C_{\mathbb{A},\Psi}$.

Recall that, as reviewed in \S\ref{sec:taubesproof}, as $r$ gets very large, the zero sets of $c_{\alpha_1}$ and $c_{\alpha_2}$ converge as currents to $\alpha_1$ and $\alpha_2$, respectively.  By taking orientation-preserving diffeomorphisms $[0,\infty) \simeq [0,1-\epsilon)$ and $(-\infty,0] \simeq (-1+\epsilon,0]$ to identify
\[
\overline{X} \simeq ((-1+\epsilon,0]\times Y_-) \cup_{Y_-} X \cup_{Y_+} (([0,1-\epsilon)\times
Y_+).
\]
and composing the closure of the image of $C_{\mathbb{A},\Psi}$ in the latter with cobordisms to the Reeb orbits in the orbit sets $\alpha_1$ and $\alpha_2$, the curve $C_{\mathbb{A},\Psi}$ defines an element $Z_{\mathbb{A},\Psi} \in H_2(\overline{X},\alpha_1,\alpha_2)$.  We can relate $I_{ECH}(Z_{\mathbb{A},\Psi})$ to the expected dimension of the corresponding Seiberg-Witten moduli space, as the following theorem shows:

\begin{theorem}
\label{thm:swindex}
Let $z \in \pi_0(\mathcal{B}(\overline{X},\mathfrak{c}))$ and represent $z$ by a configuration $(\mathbb{A},\Psi)$ over $\overline{X}$.  The integer $gr_z(X,\mathfrak{c})$ is equal to $I_{ECH}(Z_{\mathbb{A},\Psi})$.  
\end{theorem}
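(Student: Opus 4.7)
The approach will be to compute $gr_z(X,\mathfrak{c})$ by an excision argument and compare it term by term with the ECH index formula \eqref{eqn:ECHIndex}. Decompose $\overline{X}$ into the compact cobordism $X$ and the two cylindrical ends $E_+ = [0,\infty)\times Y_1$ and $E_- = (-\infty,0]\times Y_2$, and extend the trivializations $\tau$ of $\xi_{\pm}$ along the orbits in $\alpha_1\cup\alpha_2$ to a compatible reference framing over these ends. By the standard additivity of Fredholm indices along cylindrical ends (see \cite[Ch.~17]{km}), the integer $gr_z(X,\mathfrak{c})$---the index of the operator $D^{\overline{X}}_{\mathbb{A},\Psi}$ in \eqref{eq:operator}---splits as the sum of a local topological bulk term on $X$ and spectral-flow contributions from each of $E_{\pm}$.

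To compute the bulk term, observe that the principal symbol of $D^{\overline{X}}_{\mathbb{A},\Psi}$ is that of the spin$^c$ Dirac operator $D^+_{\mathbb{A}}$ on $S^+$ together with the gauge-fixing operator $d^*\oplus d^+$ on $iT^*\overline{X}$. By the local Atiyah--Singer index formula, the interior contribution is a characteristic integral built from $c_1(S^+)$, $\chi(X)$, and $\sigma(X)$. Expanding $c_1(S^+) = 2c_1(E)-c_1(K)$ via the symplectic decomposition $S^+ = E\oplus(E\otimes K^{-1})$ recalled in \S\ref{sec:swindex}, and then using $\tau$ at the ends to rewrite these topological integrals as relative invariants of $Z_{\mathbb{A},\Psi}$, the bulk contribution becomes $c_\tau(Z_{\mathbb{A},\Psi})+Q_\tau(Z_{\mathbb{A},\Psi})$, up to boundary terms that are then absorbed into the end contributions. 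On each cylindrical end the operator $D^{\overline{X}}_{\mathbb{A},\Psi}$ is translation-invariant up to decaying perturbations, and its spectral flow between the reference configuration and the asymptotic critical point $\mathfrak{c}$ has been computed by Taubes in \cite{taubes1,taubes3} in terms of the $\tau$-Conley--Zehnder indices of the Reeb orbits appearing in $\alpha_1$ and $\alpha_2$. The end contributions are thus $\mu_\tau(\alpha_1)$ at the positive end and $-\mu_\tau(\alpha_2)$ at the negative end, and summing with the bulk reproduces \eqref{eqn:ECHIndex}.

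The main obstacle lies in carrying out the above index decomposition cleanly. The operator $D^{\overline{X}}_{\mathbb{A},\Psi}$ is Fredholm only on appropriate weighted Sobolev spaces in which the asymptotic convergence to $\mathfrak{c}$ controls the $L^2$ behavior at infinity, and a clean bulk/end separation requires cutoffs that simultaneously respect the spin$^c$ structure $\mathfrak{s}_{\mathbb{A},\Psi}$ on $\overline{X}$, the trivialization $\tau$, and the asymptotic representative of the component $z\in\pi_0(\mathcal{B}(\overline{X},\mathfrak{c}))$. The delicate point is that the bulk term and the end spectral-flow contributions individually depend on $\tau$; the identification of the total index with $I_{ECH}(Z_{\mathbb{A},\Psi})$ rests on exactly the same cancellation of $\tau$-dependence that underlies the trivialization-independence of the ECH index in \cite[\S 4.2]{hutchings_absolute}. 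Tracking this cancellation carefully---aided by Taubes' detailed identification of SW solutions with orbit sets reviewed in \S\ref{sec:taubesproof}, which pins down the asymptotic model of $(\mathbb{A},\Psi)$ near each Reeb orbit---will be the technical core of the argument.
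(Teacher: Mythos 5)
There is a genuine gap: your outline defers the entire content of the theorem to two assertions, neither of which is available in the form you invoke. The claim that the end contributions equal $\mu_\tau(\alpha_1)$ and $-\mu_\tau(\alpha_2)$ is not an off-the-shelf spectral-flow result of Taubes. His index computations are not carried out for the naive linearization $D^{\overline{X}}_{\mathbb{A},\Psi}$ split by cutoffs against a $\tau$-reference configuration on the ends; they are carried out after putting the configuration in vortex normal form along a holomorphic representative $C_z$ of $Z_{\mathbb{A},\Psi}$, deforming $\omega$ to a self-dual form $\tilde{\omega}$ compatible with $C_z$ and surgering out the zero circles of $\tilde{\omega}$, so that the kernel and cokernel of the localized operator $\Delta\colon \mathcal{K}\to\mathcal{L}$ can be identified explicitly. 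It is only through that vortex analysis that the ECH-specific combination $\sum_i\sum_{k=1}^{m_i}CZ_\tau(\alpha_i^{k})$ emerges; a generic ``translation-invariant up to decaying terms, hence spectral flow $=$ Conley--Zehnder'' argument cannot distinguish this from, say, $\sum_i m_i\,CZ_\tau(\alpha_i)$, and it also gives no meaning to a ``reference configuration attached to $\tau$,'' since $\tau$ is a framing of $\xi$ along the orbits, not a Seiberg--Witten configuration. Similarly, the statement that the Atiyah--Singer bulk term equals $c_\tau(Z_{\mathbb{A},\Psi})+Q_\tau(Z_{\mathbb{A},\Psi})$ ``up to boundary terms absorbed into the end contributions'' is essentially a restatement of the theorem: the relative intersection pairing $Q_\tau$ of the class cut out by $\alpha^{-1}(0)$ does not fall out of the local index density without precisely the localization-along-the-curve analysis you are trying to bypass, and the $\tau$-cancellation you correctly flag as the technical core is left unresolved.

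For comparison, the paper's proof avoids a bulk/end splitting of $D^{\overline{X}}_{\mathbb{A},\Psi}$ altogether. Step 1 chooses a representative $C_z$ with controlled ends and performs Taubes' deformation and surgery to obtain $(\tilde{X},\tilde{\omega})$ with $\tilde{\omega}$ nonvanishing; Step 2 builds the vortex configuration $(\mathbb{A}_s,\Psi_s)$ and quotes Taubes' localized computation (Proposition~\ref{prop:keyprop}), $\op{ind}\,D^{\tilde{X}}_{\mathbb{A}_s,\Psi_s}=I_{ECH}(Z_{\mathbb{A},\Psi})-2k_L$, whose validity over a cobordism rests on the analysis being local to $C_z$; Step 3 recovers $\op{ind}\,D^{\overline{X}}_{\mathbb{A},\Psi}$ by gluing the compact pieces of $\overline{X}$ and $\tilde{X}$ into a closed manifold $S$ and computing $\op{ind}\,D^{S}=c_2(S^+)[S]=2k_L$ from the closed four-manifold index theorem, as in \eqref{eqn:addivity}--\eqref{eqn:difference}. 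If you want to carry out a proof along your proposed lines, you would need to supply (i) a precise $\tau$-dependent asymptotic model on the ends together with a proof that its spectral flow to $\mathfrak{c}$ is $\mu_\tau$, and (ii) a proof that the interior contribution is $c_\tau+Q_\tau$ of the relative class determined by the spinor's zero set; both are of the same order of difficulty as the Taubes argument the paper imports, so as written your proposal has not reduced the problem.
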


\begin{proof}

Our method of proof closely tracks the argument due to Taubes in \cite[\S 2.b]{taubes3}.  The basic approach is to change the triple $(\overline{X},J,\omega)$ into a new triple $(\tilde{X},\tilde{J},\tilde{\omega})$ (with $\tilde{\omega}$ nondegenerate but not necessarily symplectic) in which the homology class $Z_{\mathbb{A},\Psi}$ induces a homology class $\tilde{Z}_{\mathbb{A},\Psi}$ with a $\tilde{J}$-holomorphic representative with ends of a particularly nice form.  An argument due to Taubes then generalizes without difficulty to allow us to compute the ECH index of $\tilde{Z}_{\mathbb{A},\Psi}$, and it is straightforward to relate this index to the ECH index of $Z_{\mathbb{A},\Psi}$.  The details are given in three steps.       

{\em Step 1. \/}  First, choose a representative $C_z$ of the homology class of $Z_{\mathbb{A},\Psi}$ with no compact components and with ends of the special form described in \cite[\S 2.b.1]{taubes3}.  In particular, the requirements from \cite[\S 2.b.1]{taubes3} imply that the ends of $C_z$ are asymptotic to the orbit set $\alpha_1$ at $+\infty$, asymptotic to the orbit set $\alpha_2$ at $-\infty$, and converge exponentially fast.  We can then find a pair $(\tilde{J},\tilde{\omega})$, where $\tilde{J}$ is an almost complex structure on a neighborhood of $C_z$ such that $C_z$ is $\tilde{J}$-holomorphic and $\tilde{\omega}$ is a (not necessarily closed) self-dual $2$-form on $\overline{X}$ with transverse zero locus whose restriction to $C_z$ is compatible with $\tilde{J}$.  We can assume that the pair $(\tilde{J},\tilde{\omega})$ satisfies the analogues of the additional technical conditions required in \cite[\S 2.b.2]{taubes3}.  Note that these conditions force $\tilde{\omega}$ to converge exponentially fast to $ds \wedge \lambda_{\pm} + *\lambda_{\pm}$ as the norm of the $\R$-coordinate $s$ on each cylindrical end tends to infinity.    
 
Denote the zero locus of the $2$-form $\tilde{\omega}$ by $B$.  Note that $B$ consists of a finite number of disjoint embedded circles which are also disjoint from $C_z$.  Let $T$ denote a tubular neighborhood of $B$ that is disjoint from $C_z$.  We can assume that $B$ has the special description given in \cite[\S 2.b.2]{taubes3}, so that we can copy the argument in \cite[\S 2.b.3]{taubes3} to modify the manifold $\overline{X}$ and the metric on $\overline{X}$ in $T$ to obtain a new Riemannian manifold $\tilde{X}$, obtained by surgery along $T$, such that $\tilde{\omega}$ extends to a nonvanishing self-dual $2$-form on $\tilde{X}$ (which we also denote by $\tilde{\omega}$) and such that the spin$^c$ structure on $\overline{X} - T$ extends to a spin$^c$ structure on $\tilde{X}$.

Now denote the canonical bundle on $(\tilde{X},\tilde{\omega})$ by $\tilde{K}^{-1}$.  The self-dual part of the spinor bundle for the spin$^c$ structure on $\tilde{X}$ splits as $E \oplus E\tilde{K}^{-1}$ with respect to Clifford multiplication by $\tilde{\omega}$.  It will be important to understand the relationship between $\tilde{K}$ and $K$ explicitly.  To do this, recall that there is a canonical spin$^c$ structure on $\overline{X}$ with self-dual component $\C \oplus \C K^{-1}.$  Denote the $+i |\tilde{\omega}|$ eigenspace of Clifford multiplication by $\tilde{\omega}$ on the self-dual component of this spin$^c$ stucture over $\overline{X}\setminus B$ by $L$.  Then, as explained in \cite[\S4.b]{taubes3}, we have 
\[\tilde{K}=L^2K.\]  This description for $L$ ensures that we can choose $t_1, t_2$ such that $Y_1 \times \lbrace t_1 \rbrace$ and $Y_2 \times \lbrace t_2 \rbrace$ are both in $\overline{X}-T$ and the restriction of $L$ to $Y_1 \times [t_1,\infty)$ and $Y_2 \times (-\infty,t_2]$ is canonically isomorphic to the trivial bundle.

{\em Step 2.\/}  We can now copy the construction from \cite[\S2.b.6]{taubes3} to construct a particular irreducible configuration $(\mathbb{A}_s,\Psi_s)$ for our spin$^c$ structure over $\tilde{X}$ with large $|s|$ limit gauge equivalent to $\mathfrak{c}$.  Let $k_L$ denote the relative first Chern class of $L$ evaluated on $C_z$, relative to the section $1$ on $Y_1 \times \lbrace t_1 \rbrace$ and $Y_2 \times \lbrace t_2 \rbrace$.  The significance of the configuration $(\mathbb{A}_s,\Psi_s)$ is given by the following proposition:

\begin{proposition}
\label{prop:keyprop}
The index of $D_{\mathbb{A}_s,\Psi_{s}}^{\tilde{X}}$ is equal to $I_{ECH}(Z_{\mathbb{A},\Psi})-2k_L$.
\end{proposition}
\begin{proof}
This is proved (in different notation) in \cite[\S 2c]{taubes3}.  In this section, Taubes is working over a manifold which arises via surgery on the symplectization of a contact $3$-manifold $Y$, but his argument also holds in the slightly greater generality we require, see Remark~\ref{rem:keyremark} below.  
\end{proof}

\begin{remark}
\label{rem:keyremark}
It is worth summarizing Taubes' argument from \cite[\S 2c]{taubes3}, since this is the key step in the proof of Theorem~\ref{thm:swindex}.  This will also clarify why his argument holds in the greater generality we are demanding.  

To motivate Taubes' argument, we need to review how Taubes in \cite{taubes2} constructs a Seiberg-Witten instanton with the appropriate asymptotics from a curve counted by the ECH chain complex differential.  Recall from \S\ref{sec:taubesproof} that the bijection between chain complex generators that induces the isomorphism \eqref{eqn:CanonicalIso} is given by using solutions to the vortex equations to construct approximate solutions to Taubes' perturbed Seiberg-Witten equations and then using perturbation theory.  To construct an instanton from an ECH index one $J$-holomorphic curve, Taubes again uses the vortex equations to construct an approximate solution and uses perturbation theory to produce an instanton.  

This approximate solution is essentially the configuration $(\mathbb{A}_s,\Psi_s)$.  To construct an instanton, Taubes considers a family of deformations of $(\mathbb{A}_s,\Psi_s)$ parametrized by a certain Banach space 
\[\mathcal{K} \hookrightarrow \ L^2_1(i T^* \overline{X})\oplus L^2_1(S^+),\] 
where the $\hookrightarrow$ means that the map is an injection (in fact, it can be made nearly isometric after putting the norm described in \cite[Equation 2.63]{taubes3} on $L^2_1(i T^* \overline{X})\oplus L^2_1(S^+)$).  The space $\mathcal{K}$ is also constructed using the vortex equations.  Taubes then shows that constructing an instanton by perturbing $(\mathbb{A}_s,\Psi_s)$ is equivalent to solving the projection of the relevant PDE onto another Banach space
\[\mathcal{L} \hookrightarrow L^2(i\mathbb{R}) \oplus L^2(isu(S^{+}))\oplus L^2(S^-),\] 
see \cite[\S 7]{taubes2}, which Taubes then solves by using the contraction mapping theorem.  The basic idea behind Taubes' method for the index computation in \cite[\S 2.c]{taubes3} is to decompose the operator $D_{\mathbb{A}_s,\Psi_{s}}^{\tilde{X}}$ to get an operator,
\[\Delta:\mathcal{K} \to \mathcal{L}.\]  
Taubes shows that the index of $D_{\mathbb{A}_s,\Psi_{s}}^{\tilde{X}}$ is equal to the index of $\Delta$, and the kernel and cokernel of the operator $\Delta$ can both be described explicitly, see \cite[\S 2.c.3]{taubes3}.  At any rate, for our purposes, the key point is that all the relevant analysis takes place local to the curve $C_z$, hence the generalization to a cobordism with cylindrical ends.
\end{remark}   


{\em Step 3.\/}  We now complete the proof by comparing the index of $D_{\mathbb{A}_s,\Psi_s}^{\tilde{X}}$ to the index of $D^{\overline{X}}_{\mathbb{A},\Psi}.$    

Denote the component of $\overline{X}$ bounded by $Y_1 \times \lbrace t_1 \rbrace$ and $Y_2 \times \lbrace t_2 \rbrace$ by $M$ and denote the corresponding component of $\tilde{X}$ by $\tilde{M}$.  Glue $M$ to $\tilde{M}$ (reversing the orientation on $\tilde{M}$) along their common boundary to obtain a closed spin$^c$ $4$-manifold $(S,\mathfrak{s}_S)$.  Let $(\mathbb{A}_S,\Psi_S)$ be a configuration on $(S,\mathfrak{s}_S)$. The additivity of $\op{gr}$ under gluing (e.g. as explained in \cite{km}) implies that

\begin{equation}
\label{eqn:addivity}
\op{ind}(D^{\overline{X}}_{\mathbb{A},\Psi})=\op{ind}(D^{\tilde{X}}_{\mathbb{A}_s,\Psi_s})+\op{ind}(D^S_{\mathbb{A}_S,\Psi_S}).
\end{equation}

It is a simple matter to compute the index of $\op{ind}(D^S_{\mathbb{A}_S,\Psi_S})$.  Indeed, by \cite[Thm. 1.4.1]{km}, we have
\begin{equation}
\label{eqn:preindex}
\op{ind}(D^S_{\mathbb{A}_S,\Psi_S})= \frac{1}{4}(c_1(S^+)^2[S]-2\chi(S)-3\sigma(S)),
\end{equation}
where $\sigma$ denotes the signature of $S$, and by \cite[Lem.\ 28.2.3]{km} we also know that 
\begin{equation}
\label{eqn:kmlemma}
(c_2(S^+)-\frac{1}{4}c_1(S^+)^2)[S]=-\frac{1}{4}(2\chi(S)+3\sigma(S)).
\end{equation}

Combining these two equations gives
\begin{equation}
\label{eqn:index}
\op{ind}(D^S_{\mathbb{A}_S,\Psi_S})=c_2(S^+)[S].
\end{equation}

We therefore have
\begin{equation}
\label{eqn:difference}
\begin{split}
\op{ind}(D^S_{\mathbb{A}_S,\Psi_S}) &= 2(c_1(E) \cup c_1(L))[M] \\
 &= 2 k_L.
\end{split}
\end{equation} 

The result now follows by combining Proposition~\ref{prop:keyprop}, \eqref{eqn:addivity},  
and \eqref{eqn:difference}.
\end{proof}

\subsection{A concave symplectic filling}

Our strategy for proving Theorem~\ref{thm:MainTheorem} is to apply Theorem~\ref{thm:swindex} to an appropriate cobordism.  To produce this cobordism, let $\Gamma \in H_1(Y)$ and fix an orbit set $\alpha \in ECC(Y,\lambda,\Gamma)$.  Recall from \cite[Thm. 2.5]{etnyre_legendrian_knot_type} that any smooth knot can be $C^0$ approximated by a Legendrian knot.  Thus, we can choose a Legendrian knot $\mathcal{K}$ which represents the class $\Gamma$.

Recall now the concept of Legendrian surgery.   This is reviewed, for example, in \cite{etnyre_symplectic_cobordisms}.  Recall also from \cite[\S 1.6]{cc1} that if $\mathcal{K}$ is a Legendrian knot in $(Y,\lambda)$, then one can perform a  Legendrian surgery along $\mathcal{K}$ to obtain another contact $3$-manifold $(Y',\lambda')$ such that there exists a symplectic cobordism from $(Y,\lambda)$ to $(Y',\lambda')$ obtained by attaching a $2$-handle along a tubular neighborhood of $\mathcal{K}$.  Recall that a {\em concave symplectic filling\/} of $(Y,\xi)$ is a symplectic cobordism from $(Y,\xi)$ to the empty set.  Concerning concave symplectic fillings, Etnyre and Honda prove \cite[Thm.\ 1.3]{etnyre_symplectic_cobordisms} that any contact $3$-manifold has infinitely many concave symplectic fillings.

Given an orbit set $\alpha$, we can therefore combine these results to define a manifold $X_{\alpha}$ by first performing Legendrian surgery on $Y$ along $\mathcal{K}$ to obtain another contact $3$-manifold and then composing the resulting symplectic cobordism with a concave symplectic filling.  In the next section, we will apply Theorem~\ref{thm:swindex} to $X_{\alpha}$.  

\subsection{Proof of main theorem}
To prove Theorem~\ref{thm:MainTheorem}, we will assume that the contact form is $L$-flat and show that the canonical bijection \eqref{eqn:CanonicalIso} preserves the absolute gradings.  This will prove the theorem for any contact form $\lambda$, since the isomorphism \eqref{eqn:lflat} preserves the absolute grading.  So, assume that the contact form is $L$-flat, let $\alpha \in ECC^L(Y,\lambda,\Gamma)$ be an orbit set, and denote by $\mathfrak{c}_{\alpha}$ the element corresponding to $\alpha$ under the canonical bijection between the set of generators of $\widehat{CM}_L^{-*}(Y,\mathfrak{s}_{\xi}+\op{PD}(\Gamma);\lambda,r)$ and the set of admissible orbit sets in the homology class $\Gamma$ of length less than $L$.

Recall from \S\ref{sec:ECHAbsoluteGrading} that the ECH absolute grading is given by 
\begin{equation}
\label{eqn:recalledECHabsolutegradingequation}
I_{ECH}(\alpha) \eqdef P_\tau(\mathcal{L}) - \sum_i w_{\tau_i}(\zeta_i) +
  \mu_\tau(\alpha),
\end{equation}
where $w_{\tau_i}(\zeta_i)$ is the writhe of a braid $\zeta_i$ around $\alpha_i$ with $m_i$ strands, $\mu_{\tau}(\alpha)$ is a certain sum of Conley-Zehnder index terms associated to $\alpha$, and $\mathcal{L}$ is the union of the $\zeta_i$.  To relate $I_{ECH}(\alpha)$ to $I_{SW}(\mathfrak{c}_{\alpha})$, begin by recalling the symplectic manifold $X_{\alpha}$ defined in the previous section. Let $\overline{X}_{\alpha}$ denote the manifold $X_{\alpha}$ with cylindrical ends attached.  Recall that the homotopy class of two plane fields $P_{\tau}(\mathcal{L})$ determines a spin$^c$ structure $\mathfrak{s}(P_{\tau}(\mathcal{L}))$.  By \cite[Thm 3.1(b)]{hutchings_absolute}, 
\[\mathfrak{s}(P_{\tau}(\mathcal{L}))=\mathfrak{s}_{\xi}+\op{PD}([\alpha]).\]
Remember that $[\alpha]$ vanishes in $H_1(X_{\alpha})$.  Since $\mathfrak{s}_{\xi}$ extends to a spin$^c$ structure on $\overline{X}_{\alpha},$ it follows that $\mathfrak{s}(P_{\tau}(\mathcal{L}))$ does as well.  

To simplify the notation, denote the ``plus" summand of the spin bundle for the extension of $\mathfrak{s}(P_{\tau}(\mathcal{L}))$ to $X_{\alpha}$ by $S^+_{\alpha}$ and denote $\mathfrak{s}(P_{\tau}(\mathcal{L}))$ by $\mathfrak{s}_{\alpha}$.  Recall from \S\ref{sec:CanonicalGrading} that $I_{SW}(\mathfrak{c}_{\alpha})$ is the homotopy class of two-plane fields corresponding to $(\mathfrak{s}_{\alpha},\varphi_0),$ where $\varphi_0$ is a section of $S^+_{\alpha}|_Y$ satisfying
\begin{equation}
\label{eqn:canonicaldefinition}	
e(S^+_{\alpha},\varphi_0)[X_{\alpha},\partial X_{\alpha}]=gr_z(X_{\alpha};\mathfrak{c}_{\alpha}),
\end{equation}
and $z$ is any element of $\pi_0(\mathcal{B}(\overline{X}_{\alpha},\mathfrak{c}_{\alpha}))$.  For $\varphi$ any section of $S^+_{\alpha}|_Y$, denote by $\tilde{e}(S_{\alpha}^+,\varphi) \in \Z$ the {\em relative Euler number} $e(S_{\alpha}^+,\varphi)[X_{\alpha},\partial X_{\alpha}].$  Recall that the set of homotopy classes of $2$-plane fields in a given spin$^c$ structure has a $\Z$-action.  This induces an action on the second component of isomorphism classes of pairs $(\mathfrak{s}_{\alpha},\varphi),$ where $\varphi$ is a nowhere zero section.  With respect to this $\Z$-action, the relative Euler number satisfies:
\begin{equation}
\label{eqn:basicproperty}
\tilde{e}((S^+_{\alpha},\varphi)+a)=\tilde{e}(S^+_{\alpha},\varphi)-a.
\end{equation}
In particular, it follows from \eqref{eqn:canonicaldefinition} and \eqref{eqn:basicproperty} that 
\begin{equation}
\label{eqn:eqn1}
I_{SW}(\mathfrak{c}_{\alpha})=(\mathfrak{s}_{\alpha},\varphi)+\tilde{e}(S^+_{\alpha},\varphi)-gr_z(X_{\alpha};\mathfrak{c}_{\alpha}),
\end{equation}
where $\varphi$ is any section.

To relate \eqref{eqn:eqn1} to \eqref{eqn:recalledECHabsolutegradingequation}, let $\varphi_{\mathcal{L}}$ be such that $(\mathfrak{s}_{\alpha},\varphi_{\mathcal{L}})=P_{\tau}(\mathcal{L})$.  Let $\Psi$ be a section of $S^+_{\alpha}$ extending $\varphi_{\mathcal{L}}$ and transverse to the zero section, and write $S^+_{\alpha}=E \oplus (E\otimes K^{-1})$ over $X_{\alpha}$.  Write $\Psi = (\gamma,\tilde{\gamma})$ with respect to this decomposition.  The zero set of $\gamma$ defines an embedded real surface in $X_{\alpha}$, which we will denote by $C_{\mathcal{L}}$.  Composing $C_{\mathcal{L}}$ with a cobordism to the Reeb orbits in $\alpha$ determines a homology class $Z_{\mathcal{L}} \in H_2(\overline{X},\emptyset,\alpha).$  We can now apply Theorem~\ref{thm:swindex} to choose $z \in \pi_0(\mathcal{B}(\overline{X}_{\alpha},\mathfrak{c}_{\alpha}))$ such that 
\begin{equation}
\label{eqn:keyequation}
I_{ECH}(Z_{\mathcal{L}})=gr_z(X_{\alpha},\mathfrak{c}_{\alpha}).
\end{equation}       

By \eqref{eqn:eqn1} and \eqref{eqn:keyequation}, we therefore have
\begin{equation}
\label{eqn:bigequation}
I_{SW}(\mathfrak{c}_{\alpha})=(\mathfrak{s}_{\alpha},\varphi_{\mathcal{L}})+\tilde{e}(S^+_{\alpha},\varphi_{\mathcal{L}})-I_{ECH}(Z_{\mathcal{L}}).
\end{equation}
   
By the definition of $\varphi_{\mathcal{L}}$, $P_{\tau}(\mathcal{L})=(\mathfrak{s}_{\alpha},\varphi_{\mathcal{L}})$.  To complete the proof, we therefore just need to show that
\begin{equation}
\label{eqn:laststep}
\tilde{e}(S^+_{\alpha},\varphi_{\mathcal{L}})=-\sum_i \omega_{\tau_i}(\zeta_i)+\mu_{\tau}(\alpha)+I_{ECH}(Z_{\mathcal{L}}).
\end{equation} 

This computation is easiest if we choose a particular representative of the isomorphism class of $(\mathfrak{s}_{\alpha},\varphi_{\mathcal{L}}),$ since this determines the boundary of the curve $C_{\mathcal{L}}$.  Call a representative of the isomorphism class of $(\mathfrak{s}_{\alpha},\varphi_{\mathcal{L}})$ $\mathcal{L}${\em -compatible} if the boundary of $C_{\mathcal{L}}$ is $\mathcal{L}$.  Let $N$ denote the normal bundle of $C_{\mathcal{L}}$.  Given an $\mathcal{L}$-compatible representative, projection induces a canonical isomorphism between $\xi|_{\partial C_{\mathcal{L}}}$ and $N|_{\partial C_{\mathcal{L}}}$ and the trivialization $\tau$ induces a trivialization of $N$ over $\partial C_{\mathcal{L}}$.  Remembering that $K^{-1}|_Y = \xi$, we can therefore follow \cite{hutchings_absolute} and define $c_1(N,\tau)$ (resp. $c_1(K^{-1}|_{C_{\mathcal{L}}},\tau))$ to be a signed count of the zeroes of a generic section of $N$ (resp. $K^{-1}|_{C_\mathcal{L}})$ extending a nonzero section over $\partial C_{\mathcal{L}}$ that has winding number $0$ with respect to $\tau.$     
     
We now have the following lemma:
\begin{lemma}
\label{lem:windingnumberzero}
There exists an $\mathcal{L}$-compatible representative for the isomorphism class of $(\mathfrak{s}_{\alpha},\varphi_{\mathcal{L}})$ and a choice of $\Psi$ extending $\varphi_L$ for which
\[ \tilde{e}(S^+_{\alpha},\varphi_{\mathcal{L}}) = c_1(N|_{C_{\mathcal{L}}},\tau)+c_1(K^{-1}|_{C_{\mathcal{L}}},\tau).\] 
\end{lemma}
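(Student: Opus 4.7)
The plan is to compute the relative Euler number $\tilde{e}(S^+_\alpha, \varphi_L)$ by counting zeros of a carefully chosen extension $\Psi = (\gamma, \tilde\gamma)$ of $\varphi_L$, adapted to the splitting $S^+_\alpha = E \oplus (E \otimes K^{-1})$. Since $S^+_\alpha$ has complex rank two, a transverse extension that respects this splitting converts the Euler-number count into a Chern-number count on the surface cut out by the first factor.

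First I would arrange that the isomorphism class $(\mathfrak{s}_\alpha, \varphi_L)$ is realized by a representative with two additional properties near $\mathcal{L}$. Writing $\varphi_L = (\gamma_0, \tilde\gamma_0)$ with respect to the splitting of $S^+_\alpha|_Y$, I would demand that $\gamma_0$ vanish transversely along $\mathcal{L}$ with its normal derivative realizing the canonical identification $N|_\mathcal{L} \cong \xi|_\mathcal{L}$ followed by the $\tau$-trivialization of $\xi|_\mathcal{L}$; this already makes the representative $\mathcal{L}$-compatible in the sense defined just before the lemma. I would also demand that $\tilde\gamma_0$, viewed as a nowhere-zero section of $(E \otimes K^{-1})|_\mathcal{L}$, have winding number zero relative to the tensor product of the $\tau$-trivializations of $E|_\mathcal{L}$ and of $K^{-1}|_\mathcal{L} = \xi|_\mathcal{L}$. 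Such a representative exists because the isomorphism class is preserved by homotopies of $\varphi_L$ through nowhere-zero sections, so both normalizations can be installed by local modifications in a tubular neighborhood of $\mathcal{L}$.

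Next, I would extend $\varphi_L$ to a generic section $\Psi = (\gamma, \tilde\gamma)$ of $S^+_\alpha$ on $X_\alpha$, with $\gamma$ transverse to the zero section of $E$ and $\tilde\gamma|_{\gamma^{-1}(0)}$ transverse to the zero section of $(E \otimes K^{-1})|_{\gamma^{-1}(0)}$. Then $C_\mathcal{L} := \gamma^{-1}(0)$ is an embedded surface in $X_\alpha$ with boundary $\mathcal{L}$, and the zeros of $\Psi$ are precisely the isolated transverse zeros of $\tilde\gamma|_{C_\mathcal{L}}$. The signed count of these zeros equals $\tilde{e}(S^+_\alpha, \varphi_L)$, and it also equals the relative first Chern number of $(E \otimes K^{-1})|_{C_\mathcal{L}}$ with respect to the boundary trivialization $\tilde\gamma|_\mathcal{L}$. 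Using the isomorphism $E|_{C_\mathcal{L}} \cong N$ provided by the normal derivative of $\gamma$, together with additivity of relative Chern numbers under tensor products, this yields
\[
\tilde{e}(S^+_\alpha, \varphi_L) = c_1(N|_{C_\mathcal{L}}, \sigma_1) + c_1(K^{-1}|_{C_\mathcal{L}}, \sigma_2),
\]
where $\sigma_1, \sigma_2$ are the boundary trivializations of $N|_\mathcal{L}$ and $K^{-1}|_\mathcal{L}$ whose tensor product is $\tilde\gamma|_\mathcal{L}$.

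Finally, I would check that the normalizations from the first step force both $\sigma_1$ and $\sigma_2$ to have winding number zero with respect to $\tau$. For $\sigma_1$ this follows directly from the normal-derivative clause imposed on $\gamma_0$; for $\sigma_2$ it then follows from the winding-zero clause on $\tilde\gamma_0$, using additivity of winding under tensor products. With this in hand, the definitions of $c_1(N|_{C_\mathcal{L}}, \tau)$ and $c_1(K^{-1}|_{C_\mathcal{L}}, \tau)$ recalled in the excerpt yield the stated identity. I expect this last step to be the main obstacle: it amounts to a careful translation of the local model for $P_\tau(\mathcal{L})$ from \S\ref{sec:ECHAbsoluteGrading} into spinorial data on the summands of $\mathbb{S}|_Y = \mathbb{C} \oplus \xi$ near $\mathcal{L}$, reconciling the vector field $P$ that defines $P_\tau(\mathcal{L})$ through Lemma~\ref{lem:bijection} with the concrete choices of $\gamma_0$ and $\tilde\gamma_0$.
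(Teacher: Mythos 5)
Your reduction of the Euler number to a count of zeroes of $\tilde\gamma$ over $C_{\mathcal{L}}$, the use of $d\gamma$ to identify $N$ with $E|_{C_{\mathcal{L}}}$, and the tensor splitting of the boundary trivialization are exactly the first half of the paper's argument. The gap is in your first step: you assert that a representative in which (i) $\gamma_0$ vanishes transversely along $\mathcal{L}$ with normal derivative realizing the $\tau$-trivialization and (ii) $\tilde\gamma_0$ has winding number zero relative to $\tau$ ``can be installed by local modifications in a tubular neighborhood of $\mathcal{L}$, because the isomorphism class is preserved by homotopies.'' That justification is backwards: homotopies preserve the class, but it does not follow that the class $P_\tau(\mathcal{L})$ contains a representative with these normalizations. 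Indeed, if both winding normalizations could always be imposed by local modifications, the same formula would hold verbatim for the Lutz-twist class obtained by replacing \eqref{eqn:P12} with $P(t,e^{i\theta}/2)=(0,e^{i\theta})$, or for $P_{\tau'}(\mathcal{L})$ with a different framing $\tau'$; these are different homotopy classes with different relative Euler numbers, so the conclusion would be false for them. The winding data of $(\gamma_0,\tilde\gamma_0)$ along $\mathcal{L}$ relative to $\tau$ is constrained by the homotopy class near $\mathcal{L}$, and verifying that it is zero precisely for the class defined with the $e^{-i\theta}$ convention of \eqref{eqn:P12} is the actual content of the lemma, which you defer (``I expect this last step to be the main obstacle'') rather than carry out.

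The paper closes this gap by an explicit computation rather than an abstract existence argument: it writes down the vector field $P_j$ of \eqref{eqn:vectorfielddefinition} in the $\tau$-adapted coordinates on each tubular neighborhood $U_j$, takes $\tilde\xi=P^\perp$ so that $S^+_\alpha=\C\oplus\tilde\xi$ with $\varphi=(1,0)$ via Lemma~\ref{lem:bijection}, and computes Clifford multiplication by the Reeb field in the frame $\{\varphi,(0,\tilde P_j)\}$, obtaining the matrix \eqref{eqn:thematrix}. From this one reads off that the $E$-component of $\varphi$ lies in the wrong eigenspace exactly at $r=0$, so the representative is $\mathcal{L}$-compatible, and the $t$-independence of $\rho(R)$ lets one choose $e$ and $k$ constant (hence winding zero) with respect to $\tau$ along $\partial C_{\mathcal{L}}$. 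Some such translation of the model for $P_\tau(\mathcal{L})$ from \S\ref{sec:ECHAbsoluteGrading} into spinorial data is unavoidable, and without it your proof does not yet establish the statement.
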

\begin{proof}
The number $\tilde{e}(S^+,\varphi)$ is a signed count of the zeroes of $\Psi$.  A signed zero of $\Psi$ is precisely a signed zero of $\tilde{\gamma}$ over $C_{\mathcal{L}}$.  Now observe that $d\gamma$ induces an isomorphism $N\stackrel{\simeq}{\to}E|_{C_\mathcal{L}}$, and hence the trivialization of $N$ over $\partial C_{\mathcal{L}}$ induces a trivialization of $E$ over $\partial C_{\mathcal{L}}$.  We will arrange it so that
\begin{equation}
\label{eqn:goal}  
\tilde{\gamma}=e\otimes k,
\end{equation}
where $e$ is a section of $E|_{C_{\mathcal{L}}},$ $k$ is a section of $K^{-1}|_{C_{\mathcal{L}}},$ and $e|_{\partial C_{\mathcal{L}}}$ and $k|_{\partial C_{\mathcal{L}}}$ both having winding number $0$ with respect to $\tau$. The lemma will then follow after a sign check.

To arrange for \eqref{eqn:goal}, we need to analyze the boundary of $C_{\mathcal{L}}$.  Begin by letting $U_j$ be a tubular neighborhood of one of the components for one of the $\zeta_i$; assume that $U_j$ is small enough so that $U_j$ does not contain any other components of any of the $\zeta_i$.  Recall from \S\ref{sec:ECHAbsoluteGrading} that there is a trivialization of $TU_j$ extending the trivialization $\tau$ such that the Reeb vector field is always given by $\langle 1,0,0 \rangle$ and $\xi$ is given by $\lbrace 0 \rbrace \oplus \C$ according to this trivialization.  Recall from \S\ref{sec:TaubesPerturbation} the definition of the Riemannian metric determined by the contact form and the almost complex structure.  By choosing a new representative of the homotopy class of $\tau$ if necessary, we can ensure that the Riemannian metric is given by the standard dot product in this trivialization. 

We will now choose a $\mathcal{L}$-compatible representative for $P_{\tau}(\mathcal{L})$. Let $(t,r,\theta)$ denote coordinates on $U_j$, and use the above trivialization to regard a vector field over $U_j$ as a function with values in $\R \oplus \R^2$.  Define a vector field $P_j$ in $(t,r,\theta)$ coordinates by
\begin{equation}
\label{eqn:vectorfielddefinition}
P_j(t,re^{i\theta})=(-cos(\pi r),sin(\pi r)cos(\theta),-sin(\pi r)sin(\theta)),
\end{equation}           
and extend the $P_j$ by the Reeb vector field to a vector field $P$ on $Y$.  Because the $P_j$ satisfy the conditions described in \S\ref{sec:ECHAbsoluteGrading}, the $2$-plane field $\tilde{\xi}$ corresponding to $P$ represents the homotopy class of $P_{\tau}(\mathcal{L})$.  

We then have $S^+_{\alpha} = \C \oplus \tilde{\xi}$ with $\varphi = (1,0)$.  Take $\tilde{\xi}$ to be the orthogonal complement of $P$.  Remember that $E$ is by definition the $+i$ eigenspace of Clifford multiplication by the Reeb field and $EK^{-1}$ is the $-i$ eigenspace.  To prove the lemma, we therefore need to understand the Clifford multiplication $\rho$.  Recall from the proof of \cite[Lem. 28.1.1]{km} that the Clifford multiplication is determined by requiring that $\C$ is the $+i$ eigenspace of Clifford multiplication by $P$, $\tilde{\xi}$ is the $-i$ eigenspace, and, for any vector $v$ orthogonal to $P$, $\rho(v)(\varphi)=(0,v)$. 

In particular, away from the $U_j$, the $E$ component of $\varphi$ is everywhere nonzero.  The boundary of $C_{\mathcal{L}}$ is therefore contained in the union of the $U_j$.  Restrict to a single $U_j$.  To understand the components of $\varphi$ in an eigenbasis for $\rho(R)$, it is convenient to define the vector field:  
\[
\tilde{P_j}(t,r,\theta)=(sin(\pi r),cos(\pi r)cos(\theta),-cos(\pi r)sin(\theta)).
\]

Observe that $\tilde{P_j}$ and ${P_j}$ are orthogonal, and moreover
\[
\langle 1,0,0 \rangle = -cos(\pi r) P_j + sin(\pi r) \tilde{P_j}.
\]

Because $\tilde{P_j}$ is orthogonal to $P_j$, $\tilde{P_j}$ also defines a section of $\tilde{\xi}$ over $U_j$.  We can therefore view $\lbrace \varphi, (0,\tilde{P_j}) \rbrace$ as a frame for $S^+_{\alpha}$ over $U_j$, and in this frame, Clifford multiplication by the Reeb vector field is given by
\begin{equation}
\label{eqn:thematrix}
\rho(R) = \begin{pmatrix} -icos(\pi r) & -sin(\pi r) \\ sin(\pi r) & i cos(\pi r) \end{pmatrix}. 
\end{equation}

Observe first of all that $\varphi=(1,0)$ is in the $-i$ eigenspace of $\rho(R)$ precisely when $r=0$.  This implies that the boundary of $\partial C_{\mathcal{L}}$ is $\mathcal{L}$.  Since $\rho(R)$ does not depend on $t$, we can arrange for \eqref{eqn:goal} with $e|_{\partial C_{\mathcal{L}}}$ and $k|_{\partial C_{\mathcal{L}}}$ in fact constant with respect to $\tau$.  The lemma now follows.
\end{proof}   

We can now show \eqref{eqn:laststep}, completing the proof of Theorem~\ref{thm:MainTheorem}.  Hutchings' argument from \cite[Prop. 3.1]{hutchings_original} gives
\begin{equation}
\label{eqn:eqn3}
c_1(N,\tau) = -\omega_{\tau}(\mathcal{L})+Q_{\tau}(Z_{\alpha}),
\end{equation}
and we also know that 
\begin{equation}
\label{eqn:eqn4}
c_\tau(Z_\alpha) = c_1(K^{-1}|_{C_\alpha},\tau). 
\end{equation}

Equation \eqref{eqn:laststep} now follows by choosing an $\mathcal{L}$-compatible representative and then applying Lemma~\ref{lem:windingnumberzero}, \eqref{eqn:eqn3}, \eqref{eqn:eqn4}, and the definition of $I_{ECH}$.  This completes the proof of Theorem~\ref{thm:MainTheorem}.

\paragraph{Acknowledgments} The author was partially supported
by NSF grant DMS-0838703.  The author would also like to thank Cliff Taubes, Peter Kronheimer,  Michael Hutchings, and Daniel Pomerleano for many helpful conversations.  The author would also like to thank Michael Hutchings for patiently reading over earlier drafts of this work and suggesting improvements.  Finally, the author would like to thank the anonymous reviewer for his or her suggestions.

\end{document}